\def\mycommand{\setlength{\abovedisplayskip}{3pt}%
\setlength{\belowdisplayskip}{3pt}%
\setlength{\abovedisplayshortskip}{2pt}%
\setlength{\belowdisplayshortskip}{2pt}}
\let\oldselectfont\selectfont
\def\selectfont{\oldselectfont\mycommand}
\newcommand*\hexbrace[2]{%
  \underset{#2}{\underbrace{\rule{#1}{0pt}}}}
\newtheorem{theo}{Theorem}
\newtheorem{prop}[theo]{Proposition}
\newtheorem{cor}[theo]{Corollary}
\newtheorem{lem}[theo]{Lemma}
\newtheorem{rem}[theo]{Remark}
\newtheoremstyle{remarkstyle}{\topsep}{\topsep}{\rm}{}{\bfseries}{.}{.5em}{}
\theoremstyle{remarkstyle}
\newtheorem{app}[theo]{Appendix}
\newcommand{\Z}{\mathbb{Z}}
\newcommand{\Q}{\mathbb{Q}}
\newcommand{\R}{\mathbb{R}}
\newcommand{\field}{\mathbb{F}}
\newcommand{\fieldsep}{\mathbb{F}_{\text{sep}}}
\newcommand{\Efield}{\mathbb{E}}
\newcommand{\so}{\mathfrak{so}}
\newcommand{\spLie}{\mathfrak{sp}}
\newcommand{\orthoform}{\Omega}
\newcommand{\orthinv}{\tau}
\newcommand{\sympform}{\Psi}
\newcommand{\sympinv}{\psi}
\newcommand{\roots}{\Phi}
\newcommand{\Galgroup}{\Gamma} 
\newcommand{\Gmult}{\mathbb{G}_m} 
\DeclareMathOperator{\G}{\bf{G}} 
\DeclareMathOperator{\Hh}{\bf{H}} 
\DeclareMathOperator{\T}{\bf{T}} 
\DeclareMathOperator{\Spin}{\bf{Spin}} 
\DeclareMathOperator{\Spinp}{Spin} 
\DeclareMathOperator{\HSpin}{\bf{HSpin}}
\DeclareMathOperator{\HSpinp}{HSpin}
\DeclareMathOperator{\SO}{\bf{SO}}
\DeclareMathOperator{\SP}{\bf{Sp}}
\DeclareMathOperator{\SPp}{Sp}
\DeclareMathOperator{\PSP}{\bf{PSp}}
\DeclareMathOperator{\PSPp}{PSp}
\DeclareMathOperator{\PSO}{\bf{PSO}}
\DeclareMathOperator{\Sym}{S} 
\DeclareMathOperator{\Mat}{M}
\DeclareMathOperator{\Hom}{Hom}
\DeclareMathOperator{\Inv}{Inv} 
\DeclareMathOperator{\Br}{Br} 
\DeclareMathOperator{\Gal}{Gal} 
\title{Rost Multipliers of Lifted Kronecker Tensor Products}
\author[C. Ruether]{Cameron Ruether}
\address{Department of Mathematics and Statistics, University of Ottawa, 150 Louis-Pasteur
Ottawa, ON K1N 6N5 Canada. cruet042@uottawa.ca}
\thanks{Partially supported by NSERC Discovery Grant RGPIN-2015-04469}
\subjclass[2010]{20G07, 14L30}
\keywords{cohomological invariants, semisimple groups, half-spin group, Rost multipliers, Kronecker product}
\begin{document}
\maketitle

\begin{abstract}
We extend techniques employed by Garibaldi to construct various new injections involving the half-spin group, $\HSpin$, induced by lifting the Kronecker tensor product to simply connected groups. We calculate the Rost multipliers of the maps we have constructed. Furthermore, we utilize our new map $\PSP_{2n}\times \PSP_{2m} \hookrightarrow \HSpin_{4nm}$ to describe the structure of the normalized degree three cohomological invariants of $\HSpin_{4n}$.
\end{abstract}

\section{Introduction}
Degree three cohomological invariants of semisimple linear algebraic groups as given in \cite{GMS} have been recently studied and computed   in \cite{Baek}, \cite{BR}, \cite{GQ-M}, \cite{Mer}, \cite{MNZ} and others. Rost multipliers played an important role in those computations. In this paper, we introduce new maps between split linear algebraic groups, compute their Rost multipliers, and utilize one of the new maps to describe the structure of cohomological invariants of the split half-spin group. In particular, working over a field $\field$ of characteristic different from 2, we use the Steinberg construction of Chevalley groups to explicity describe the Kronecker tensor product map between split semisimple linear algebraic group schemes lifted to the simply connected setting (this is done in section \ref{sec:liftings}). We do so by generalizing the methods of Garibaldi from \cite[\S 7]{Gar} which are outlined in section \ref{Garibaldi_Example}. Using this explicit description we are able to compute the kernel of compositions of these maps into the half-spin group and therefore produce new injections.
\renewcommand*{\thetheo}{\Alph{theo}}
\begin{theo}
When at least one of $n$ and $m$ is even there exist commutative diagrams of split linear algebraic group schemes
\[
\begin{tikzcd}
\SP_{2n}\times \SP_{2m} \arrow[twoheadrightarrow]{d} \arrow{r} & \Spin_{4nm} \arrow[twoheadrightarrow]{d}\\
\PSP_{2n}\times\PSP_{2m} \arrow[hookrightarrow]{r} & \HSpin_{4nm}
\end{tikzcd}
\begin{tikzcd}
\Spin_{2n}\times\Spin_{2m} \arrow[twoheadrightarrow]{d} \arrow{r} & \Spin_{4nm} \arrow[twoheadrightarrow]{d} \\
\PSO_{2n}\times \PSO_{2m} \arrow[hookrightarrow]{r} & \HSpin_{4nm}.
\end{tikzcd}
\]
Furthermore, for any $n$ and $m$, there exists a commutative diagram of split linear algebraic group schemes
\[
\begin{tikzcd}
\Spin_{4n}\times\Spin_{2m+1} \arrow[twoheadrightarrow]{d} \arrow{r} & \Spin_{4n(2m+1)} \arrow[twoheadrightarrow]{d} \\
\HSpin_{4n}\times \SO_{2m+1} \arrow[hookrightarrow]{r} & \HSpin_{4n(2m+1)}
\end{tikzcd}
\]
In all diagrams the top maps are the lifted Kronecker tensor product.
\end{theo}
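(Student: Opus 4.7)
The plan is to handle all three commutative squares by the same general strategy. Fix one of the three setups and denote the top map by $K\colon G_1\times G_2 \to \Spin_N$ (with $N=4nm$ or $4n(2m+1)$), the left quotient by $q\colon G_1\times G_2 \twoheadrightarrow \overline{G}_1\times\overline{G}_2$, and the right quotient by $\pi\colon \Spin_N\twoheadrightarrow \HSpin_N$. To produce the bottom map $\overline{K}$ making the square commute, it suffices to verify the kernel containment $\ker(q)\subseteq \ker(\pi\circ K)$; injectivity of $\overline{K}$ is then equivalent to the reverse containment.

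First I would use the explicit Steinberg-generator description of the lifted Kronecker tensor product established in section \ref{sec:liftings} to evaluate $K$ on generators of $\ker(q)$. Since $G_1$ and $G_2$ are simply connected, $\ker(q)$ is a finite subgroup of $Z(G_1)\times Z(G_2)$ whose generators admit explicit formulas as cocharacter values $t^{\omega^\vee}$ attached to appropriate fundamental coweights. Pushing these generators through $K$ reduces to computing how the lifted map acts on maximal tori, a character-theoretic calculation carried out by decomposing the natural representation underlying $\Spin_N$ as a tensor product of the defining representations of $G_1$ and $G_2$.

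Next I would identify the resulting elements inside $Z(\Spin_N)$ and check whether they land in the order-two subgroup $\ker(\pi)$ cutting out the half-spin quotient. This is the place where the parity hypotheses intervene: in the $\SP\times\SP$ and $\Spin\times\Spin$ cases, the parity of $n$ or $m$ dictates which of the two non-identity central elements of $Z(\Spin_{4nm})$ is reached, while in the $\Spin_{4n}\times\Spin_{2m+1}$ diagram the odd dimension of the second factor forces the image into $\ker(\pi)$ for all $n$ and $m$. Once the containment is in hand, $\overline{K}$ exists by the universal property of the quotient $q$; injectivity follows by showing that any central element of $G_1\times G_2$ lying outside $\ker(q)$ is mapped by $K$ to an element of $Z(\Spin_N)\setminus \ker(\pi)$, again read off from the torus computation.

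The main obstacle is the central-element tracking in the first step: for each generator of $\ker(q)$ one must determine its image under $K$ as an explicit element of $Z(\Spin_N)$, and in particular distinguish between the two half-spinor characters. This requires careful sign and half-weight bookkeeping inherited from the Chevalley/Steinberg conventions set up in section \ref{sec:liftings}, and it is the sole point at which the parity hypotheses play an essential role; everything else is a formal consequence once these central values are pinned down.
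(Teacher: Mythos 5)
Your proposal tracks the paper's proof strategy exactly: Propositions \ref{prop: PSp induced}, \ref{prop: PSO induced}, and \ref{prop:HSpin induced} likewise observe that the relevant kernels are central (because the lifted map factors the matrix Kronecker product, whose kernel is central), compute the image of the center under the lifted map via Corollary \ref{cor:max tori}, identify which of $\xi_1,\xi_2$ is hit depending on the parities of $n$ and $m$, and then invoke the first isomorphism theorem to produce the injection on quotients. The only cosmetic slip is the phrase ``two non-identity central elements of $Z(\Spin_{4nm})$,'' since $Z(\Spin_{4nm})\cong\mu_2\times\mu_2$ has three such elements; what you mean (and what the computation pins down) is which of the two elements $\xi_1,\xi_2$ lying outside $\ker(\Spin_{4nm}\to\SO_{4nm})$ is reached, which matches the paper.
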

The Rost multipliers, integers describing induced maps between quadratic invariants, of these news maps are computed in section \ref{sec:Rost}. Our main result is in section \ref{sec:cohomology}. As an application of the existence of the map $\SP_{2n}\times\SP_{2m}\to\HSpin_{4nm}$, we prove the following structure theorem about the degree three normalized cohomological invariants of the half-spin group.
\begin{theo}
Let $n\geq 2$. We can describe the degree three normalized invariants of $\HSpin_{4n}$ as
\[
\Inv^3(\HSpin_{4n},2)_{\emph{norm}} \cong \begin{cases}
\field^\times/(\field^\times)^2 & n \text{ is odd or } n=2 \\
\field^\times/(\field^\times)^2 \oplus \Z/2\Z & n\equiv 2\pmod{4} \text{ and } n\neq 2 \\
\field^\times/(\field^\times)^2 \oplus \Z/4\Z & n\equiv 0\pmod{4}.
\end{cases}
\]
\end{theo}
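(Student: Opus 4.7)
The plan is to combine Merkurjev's general classification of degree-three cohomological invariants of semisimple groups with the embedding $\iota\colon\PSP_{2n}\times\PSP_2\hookrightarrow\HSpin_{4n}$ established in Theorem~A (the $m=1$ case of the first diagram, valid when $n$ is even) together with the Rost multipliers computed in Section~\ref{sec:Rost}.

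First I would identify the decomposable summand $\field^\times/(\field^\times)^2$. The simply connected cover $\pi\colon\Spin_{4n}\twoheadrightarrow\HSpin_{4n}$ has kernel $\mu_2$, and the connecting map $\delta\colon H^1(-,\HSpin_{4n})\to H^2(-,\mu_2)=\Br_2$ together with cupping by symbols produces decomposable invariants $T\mapsto(a)\cup\delta(T)$, one for each $a\in\field^\times/(\field^\times)^2$. A Merkurjev-style classification of natural transformations $\Br_2\to H^3$ by symbols identifies these as the kernel of the pullback $\pi^*\colon\Inv^3(\HSpin_{4n},2)_{\text{norm}}\to\Inv^3(\Spin_{4n},2)_{\text{norm}}$, giving the $\field^\times/(\field^\times)^2$ summand in all three cases.

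Next I would determine the remaining quotient. Since $\Inv^3(\Spin_{4n},2)_{\text{norm}}$ is cyclic generated by the Rost invariant $R_{\Spin_{4n}}$, the task reduces to finding which multiples of $R_{\Spin_{4n}}$ lift to $\HSpin_{4n}$ modulo decomposables (a nonsplit extension of this kind can produce an element of order $4$). For $n$ even, I would pull back along $\iota$: since $\SP_{2n}$ and $\SL_2$ both have trivial Rost invariant, $\Inv^3(\PSP_{2n}\times\PGL_2,2)_{\text{norm}}$ is purely decomposable, and the Rost multiplier of $\iota$ computed in Section~\ref{sec:Rost} yields the divisibility constraint pinning down the quotient as $\Z/2\Z$ when $n\equiv 2\pmod 4$ and $\Z/4\Z$ when $n\equiv 0\pmod 4$; the exceptional case $n=2$ collapses further by the triality isomorphism $\HSpin_8\cong\SO_8$. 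For $n$ odd, where Theorem~A does not apply, I would compute the Rost invariant of the central $\mu_2$-torsor in $\Spin_{4n}$ directly and observe it is non-decomposable, forcing the quotient to be trivial.

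The main obstacle is realizing the claimed lower bound. After establishing the upper bound via $\iota^*$, one must exhibit explicit invariants of $\HSpin_{4n}$ realizing $\Z/2\Z$ and $\Z/4\Z$, respectively. A natural candidate is a \emph{fractional Rost invariant} defined by choosing a lift to $\Spin_{4n}$ and absorbing the lift-ambiguity into a distinguished decomposable such as $(-1)\cup\delta(T)$; verifying that this assignment is well-defined, normalized, and realizes the claimed order will be the technical heart of the proof, and will simultaneously justify the direct sum decomposition in the stated form.
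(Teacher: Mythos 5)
The theorem here has a smaller scope than you are attempting: the groups $\Inv^3(\HSpin_{4n},2)_{\text{dec}}\cong\field^\times/(\field^\times)^2$ and $\Inv^3(\HSpin_{4n},2)_{\text{ind}}$ are taken as already known (they are recalled in section \ref{sec:cohomology} from \cite{Mer} and \cite{BR}), so the entire content of the theorem is that the extension
\[
0\to\Inv^3(\HSpin_{4n},2)_{\text{dec}}\to\Inv^3(\HSpin_{4n},2)_{\text{norm}}\to\Inv^3(\HSpin_{4n},2)_{\text{ind}}\to 0
\]
is split. Your proposal spends most of its effort trying to re-derive the indecomposable and decomposable groups (via the pullback to $\Spin_{4n}$, which multiples of the Rost invariant lift, triality for $\HSpin_8$, the $n$-odd case, etc.), which is work the paper does not need to do, and then handles the actual splitting only at the end, via a vaguely sketched ``fractional Rost invariant'' construction that is left unverified. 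The paper's argument is an exponent argument: it shows $\Inv^3(\HSpin_{4n},2)_{\text{norm}}$ has the same exponent as the cyclic quotient $\Inv^3(\HSpin_{4n},2)_{\text{ind}}$, from which a splitting follows immediately.

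Two concrete gaps in your plan. First, the claim that $\Inv^3(\PSP_{2n}\times\PGL_2,2)_{\text{norm}}$ is purely decomposable because $\SP_{2n}$ and $\SL_2$ have trivial Rost invariant is false when $n\equiv 0\pmod 4$: in that case $\Inv^3(\PSP_{2n},2)_{\text{ind}}\cong\Z/2\Z$. This is precisely why the paper uses the map $\phi_{\SP}'\colon\PSP_4\times\PSP_{2m}\to\HSpin_{4n}$ (with $n=2m$) rather than your $\PSP_{2n}\times\PSP_2$ --- the first factor $\PSP_4$ always has trivial indecomposables, leaving only $\PSP_{2m}$ to control, and the argument then bounds the exponent by $4$ rather than asserting pure decomposability. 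Second, and more importantly, the argument cannot close without Lemma \ref{lem:decomposable pullback}: the pullback of decomposable invariants along $\phi_{\SP}'$ is the \emph{diagonal injection} $\field^\times/(\field^\times)^2\hookrightarrow\field^\times/(\field^\times)^2\oplus\field^\times/(\field^\times)^2$. That injectivity is what lets one conclude, from the vanishing of $2\Delta$ (or $4\Delta$) in both the ind quotient and in $\Inv^3(\PSP\times\PSP,2)_{\text{norm}}$, that $2\Delta=0$ (or $4\Delta=0$) in $\Inv^3(\HSpin_{4n},2)_{\text{norm}}$ itself. Your proposal never establishes or uses this injectivity, so there is no mechanism to rule out a nonsplit extension, which is exactly the possibility you acknowledge but then defer to an unspecified construction.
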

This generalizes a result of Bermudez and Ruozzi in \cite{BR}. Finally in section \ref{sec:invariant}, we describe an explicit non-trivial non-decomposable normalized degree three invariant of $\HSpin_{4n}$ arising as a pull back of an invariant of $\PSO_{4n}$ constructed in \cite{Mer}. When $n\equiv 2\pmod{4}$ and $n\neq 2$, this gives a complete description of the normalized invariants of $\HSpin_{4n}$ (Remark \ref{explicit invariant}). Additionally, the appendices contain concrete descriptions of the groups $\SP,\Spin$ and $\SO$ via Chevalley generators and relations which are utilized throughout the paper.

\subsection*{Acknowledgements}
I am very grateful to the referee for their useful comments which improved the presentation of this paper substantially. In particular, simplifying the proof of theorem \ref{theo:invariant splitting}, as well as suggesting the considerations of section \ref{sec:invariant}.

\renewcommand*{\thetheo}{\arabic{theo}}
\setcounter{theo}{0}
\section{Preliminaries}\label{sec:prelim}
Throughout this paper we work over a field $\field$ of characteristic different from $2$ and deal with linear algebraic groups schemes of types B, C, and D over $\field$. We do so by mainly focusing on the group of points over $\fieldsep$, the separable closure of $\field$, and obtaining the $\field$-points via Galois descent.
\subsection{Split Linear Algebraic Group Schemes}\label{sec:split groups}
In the split case, we will consider the orthogonal involution on $\Mat_n(\field)$ given by
\[
\begin{tabular}{cc}
$\begin{aligned}
\orthinv_n \colon \Mat_n(\field) &\to \Mat_n(\field) \\
A &\mapsto \orthoform_n A^T \orthoform_n
\end{aligned}$ & \text{ where } $\orthoform_n = \begin{bmatrix} & & 1 \\ &\rotatebox{45}{$\hdots$} & \\ 1 & & \end{bmatrix}$
\end{tabular}
\]
and the associated group schemes $\PSO_n(\orthinv_n), \SO_n(\orthinv_n)$, and $\Spin_n(\orthinv_n)$. For brevity, we use the unadorned $\PSO_n,\SO_n$, and $\Spin_n$ when the orthogonal involution in question is the above $\orthinv_n$. The $\fieldsep$-points of these groups are
\begin{align*}
\SO_n(\fieldsep) &= \{ A \in \Mat_n(\fieldsep) \mid A\cdot(\orthinv_n\otimes 1)(A)=I, \det(A)=1 \} \\
\PSO_n(\fieldsep) &= \SO_n(\fieldsep)/Z(\SO_n(\fieldsep)) \\
\Spin_n(\fieldsep) &= \langle x_\alpha(t) \mid \alpha \in \roots_{\Spin_n}, t\in \fieldsep \rangle.
\end{align*}
Here we are using the language of Chevalley generators as in \cite{Stein} to describe $\Spin_n$.  In this language, the $\fieldsep$ points of a group scheme $\G$ are generated by elements $x_\alpha(t)$ where $\alpha \in \roots_{\G}$ is a member of the root system of the group and $t \in \fieldsep$. Additional significant elements are named
\[
w_\alpha(t) := x_\alpha(t) x_{-\alpha}(-t^{-1}) x_\alpha(t) \text{ and } h_\alpha(t):= w_\alpha(t)w_\alpha(-1) \quad \text{for }  t \in \fieldsep^\times.
\]
The generators of $\Spin_n$ satisfy the relations of appendix \ref{Spin app}. To see the relationship between these groups in the language of Chevalley generators we use \cite[Lemma 28(d)]{Stein} to compute the center of $\Spin_n$. The center of $\Spin_{2n}$ when $n$ is even can also be found in \cite[Example 8.6]{GQ-M}. Noting that $\Spin_{2n}$ is type D$_n$ and $\Spin_{2n+1}$ is type B$_n$, in each case let $\roots$ be the root system and choose the standard simple system of roots $\Delta=\{\alpha_1,\ldots,\alpha_n\} \subset \Phi$ as in \cite{Bou4-6}. Then
\[
Z(\Spin_{2n}) \cong \begin{cases} \mu_2\times\mu_2 & n \text{ even}\\
 \mu_4 & n \text{ odd}
\end{cases} \text{ and }Z(\Spin_{2n+1}) \cong \mu_2.
\]
The $\fieldsep$-points are
\begin{align*}
&Z(\Spin_{2n}(\fieldsep)) = \begin{cases}\{ 1, \xi_1,  h_{\alpha_{n-1}}(-1)h_{\alpha_n}(-1), \xi_2 \} & n \text{ even}\\
\{ 1, \zeta, \zeta^2, \zeta^3 \} & n \text{ odd}\\
\end{cases}\\
&Z(\Spin_{2n+1}(\fieldsep)) = \{ 1, h_{\alpha_n}(-1)\}.
\end{align*}
where
\begin{align*}
\xi_1 &= \prod_{k=1 \atop k \text{ odd}}^{n-1} h_{\alpha_k}(-1), & \zeta &= \prod_{k=1 \atop k \text{ odd}}^{n-2} h_{\alpha_k}(-1) \cdot h_{\alpha_{n-1}}(i) h_{\alpha_n}(-i) \\
\xi_2&=h_{\alpha_{n-1}}(-1)h_{\alpha_n}(-1)\xi_1
\end{align*}
and $i\in \fieldsep$ is an element such that $i^2=-1$. Note that since the $h_\alpha(t)$ commute with one another and are multiplicative in their arguments, $\zeta^2 = h_{\alpha_{n-1}}(-1)h_{\alpha_n}(-1)$. Therefore we have that
\begin{align*}
\SO_{2n}(\fieldsep) &\cong \Spin_{2n}(\fieldsep)/\{1, h_{\alpha_{n-1}}(-1)h_{\alpha_n}(-1)\} \\
\PSO_{2n} &\cong \Spin_{2n}/Z(\Spin_{2n}) \\
\SO_{2n+1} &\cong \Spin_{2n+1}/Z(\Spin_{2n+1}) \cong \PSO_{2n+1}.
\end{align*}
Hence these groups can also be described by the Chevalley generators $x_\alpha(t)$ subject to the usual relations in $\Spin_n$ with the additional relations imposed by the quotients. The translation between the Chevalley language and the matrix language within $\SO_n$ is given in appendix \ref{Spin app}. When $n$ is even, $\Spin_{2n}$ has additional central subgroups. The \emph{half-spin group scheme} is defined over $\fieldsep$ by
\[
\HSpin_{2n}(\fieldsep) = \Spin_{2n}(\fieldsep)/\{1, \xi_1\} \cong \Spin_{2n}(\fieldsep)/\{1, \xi_2\}.
\]

We also consider a symplectic involution on $\Mat_{2n}(\field)$ given by
\[
\begin{tabular}{cc}
$\begin{aligned}
\sympinv_{2n} \colon \Mat_{2n}(\field) &\to \Mat_{2n}(\field) \\
A &\mapsto -\sympform_{2n} A^T \sympform_{2n}
\end{aligned}$ & \text{ where } $\sympform_{2n}=\begin{bmatrix}0 & \orthoform_n \\ -\orthoform_n & 0 \end{bmatrix}$
\end{tabular}
\]
and the associated group schemes $\PSP_{2n}(\sympinv_{2n})$ and $\SP_{2n}(\sympinv_{2n})$. Again we use the unadorned $\PSP_{2n}$ and $\SP_{2n}$ when the symplectic involution in question is the above $\sympinv_{2n}$. These have $\fieldsep$-points
\begin{align*}
\SP_{2n}(\fieldsep) &= \{ A \in \Mat_{2n}(\fieldsep) \mid A\cdot (\sympinv_{2n}\otimes 1)(A)=I \} \\
\PSP_{2n}(\fieldsep) &= \SP_{2n}(\fieldsep)/\{I,-I\} 
\end{align*}
and $Z(\SP_{2n}) \cong \mu_2$. We will also work with the symplectic groups via the language of Chevalley generators. The translation between the Chevalley language and the matrix language within $\SP_{2n}$ is given in appendix \ref{SP app}. We note here that in this language the center of the symplectic group is 
\[
Z(\SP_{2n}(\fieldsep)) = \{ I, \prod_{i=1 \atop i \text{ odd}}^n h_{\alpha_i}(-1)\}.
\]
Finally, we will obtain the $\field$-points of all these groups by considering the fixed points of the $\Galgroup = \Gal(\fieldsep/\field)$ action on the $\fieldsep$-points described above. The action of $\Galgroup$ on Chevalley generators is included in appendix \ref{SP app} and \ref{Spin app}.

\subsection{Kronecker Tensor Product}
The titular Kronecker tensor product map, on the level of matrices over a field extension $\Efield/\field$, is the universal map $\Mat_n(\Efield)\times \Mat_m(\Efield) \to \Mat_n(\Efield)\otimes \Mat_m(\Efield)\cong \Mat_{nm}(\Efield)$. These maps restrict to produce the following two group scheme homomorphisms.
\begin{align*}
\rho_{\SO} \colon &\SO_n \times \SO_m\to \SO_{nm} \\
\rho_0 \colon &\SP_{2n}\times \SP_{2m} \to \SO_{4nm}(\sympinv_{2n}\otimes \sympinv_{2m}).
\end{align*}
Rather than consider the second homomorphism, we will compose it with a conjugation to obtain
\begin{align*}
\rho_{\SP} \colon \SP_{2n}\times \SP_{2m} &\to \SO_{4nm} \\
(A,B) &\mapsto P^{-1}\rho_0(A,B)P
\end{align*}
where
\[\tiny
P= \begin{array}{c}
\left[\begin{array}{ccc;{2pt/2pt}ccc}
J & & & & & \\
 & \ddots & & & & \\
 & & J & & & \\ \hdashline[2pt/2pt]
  & & & K & & \\
  & & & & \ddots & \\
  & & & & & K \\
\end{array}\right] \\
\hexbrace{6em}{n}\quad\hexbrace{6em}{n}
\end{array},\scriptsize \text{ with }  J=\begin{bmatrix} I_m & 0 \\ 0 & -\orthoform_m \end{bmatrix}, K=\begin{bmatrix}\orthoform_m & 0 \\ 0 & I_m\end{bmatrix}.
\]

\subsection{Rost Multipliers}
Here we introduce the notion of the Rost multipliers of a homomorphism between linear algebraic groups following \cite{GMS}. Rost multipliers are integers that describe the homomorphism's induced map between quadratic invariants as follows. Let $\G$ be a split semisimple linear algebraic group and let $\T\subset \G$ be a split maximal torus.
\[
\T \cong \Gmult^n \;\text{ and }\; \T^* = \Hom_\field(\T,\Gmult) \cong \Z^n.
\]
Since $\T$ acts diagonalizably on the Lie algebra of $\G$ via the adjoint action, a copy of the root system of $\G$, denoted $\roots$, lies within $\T^*$. Let $W$ be the Weyl group of $\roots$. The action of $W$ on $\roots$ extends naturally to $\T^*$ and then also to the degree $2$ elements of the symmetric tensor product, $S^2(\T^*)$. The invariant elements, $\Sym^2(\T^*)^W$, are integral $W$-invariant quadratic forms on $\T^*\otimes \R$. These are the aforementioned quadratic invariants. When $\G$ is simple, $\Sym^2(\T^*)^W \cong \Z\langle q \rangle$ is an infinite cyclic group generated by an element $q$ called the \emph{normalized Killing form} of $\G$. When $\G$ is semisimple these invariants form a free group generated by the normalized Killing forms of the simple components.

This construction has functorial properties, \cite[pg.119]{GMS}. If $\varphi \colon \G \rightarrow \Hh$ is a homomorphism of split semisimple linear algebraic groups, then there is an induced map on characters, $\varphi^*\colon \T_{\Hh}^* \rightarrow \T_{\G}^*$, which in turn induces a map on quadratic invariants
\[
\varphi^\dagger \colon \Sym(\T_{\Hh}^*)^{W_{\Hh}} \rightarrow \Sym(T_{\G}^*)^{W_{\G}}
\]
by extending algebraically. Because $\varphi^\dagger$ is a homomorphism of free groups, the image of each generator is an integral combination of generators in the codomain, and these integers are called the \emph{Rost multipliers} of $\varphi$. 

\section{Garibaldi's Example, $\PSPp_2\times\PSPp_8\hookrightarrow \HSpinp_{16}$}\label{Garibaldi_Example}
In \cite{Gar}, Garibaldi considered the map
\[
\begin{array}{lc}
\begin{aligned}
\varphi \colon \SP_2(\field) \times \SP_8(\field) &\to \SO_{16}(\field) \\
(A,B) &\mapsto P^{-1}\rho(A,B)P
\end{aligned} &  \text{ where } 
P = \begin{bmatrix} I_4 & 0 & 0 & 0 \\
0 & 0 & \orthoform_4 & 0 \\
0 & -\orthoform_4  & 0 & 0\\
0 & 0 & 0 & I_4 
\end{bmatrix}.
\end{array}
\]
This $P$ differs slightly from the one printed in \cite{Gar} but later calculations there agree with this $P$. We use the notational shorthand $\PSPp_2 = \PSP_2(\field)$ and likewise for other groups where appropriate. Garibaldi described the restriction of $\varphi$ to maximal tori using the language of Chevalley groups and then noted that there is a lifting $\phi \colon \SPp_2 \times \SPp_8 \rightarrow \Spinp_{16}$ which acts analogously on Chevalley generators. The lifting's restriction to maximal tori, and in particular to the center, is described in the following tables.
\begin{align*}
\begin{array}{c|c}
h\in\SPp_2\times \SPp_8 & \phi(h)\in \Spinp_{16} \\ \hline
(h_1(t),I) & h_{1}(t)h_{2}(t^2)h_{3}(t^3)h_{4}(t^4)h_{5}(t^3)h_{6}(t^2)h_{7}(t) \\
(I,h_1(u)) & h_{1}(u) h_{7}(u^{-1}) \\
(I,h_2(u)) & h_{2}(u) h_{6}(u^{-1}) \\
(I,h_3(u)) & h_{3}(u) h_{5}(u^{-1}) \\
(I,h_4(u)) & h_{4}(u) h_{5}(u^2) h_{6}(u^2) h_{7}(u) h_{8}(u)
\end{array}\\[1em] 
\begin{array}{c|c}
h \in Z(\SPp_2 \times \SPp_8) & \phi(h) \in \Spinp_{16} \\ \hline
(I,I) & 1 \\
(h_1(-1),I) & h_{1}(-1) h_{3}(-1) h_{5}(-1) h_{7}(-1) = \xi_1 \\
(I,h_{1}(-1)h_{3}(-1)) & h_{1}(-1) h_{3}(-1) h_{5}(-1) h_{7}(-1) = \xi_1 \\
(h_1(-1), h_{1}(-1) h_{3}(-1) & 1 \\
\end{array}
\end{align*}
Therefore $\phi$ induces an  injection $(\SPp_2\times \SPp_8 )/ Z(\SPp_2\times \SPp_8)\hookrightarrow \Spinp_{16}/\{1, \xi_1 \}$ which is the desired map $\PSPp_2 \times \PSPp_8 \hookrightarrow \HSpinp_{16}$.

\section{Chevalley Generator Descriptions of Kronecker Tensor Products}\label{sec:liftings}
Following the method of the previous example, we aim to explicitly describe Kronecker tensor product maps between simply connected groups using the language of Chevalley generators. To begin, we apply \cite[Proposition 2.24(i)]{BT} to the maps, 
\[
\rho_{\SP} \colon \SP_{2n}\times \SP_{2m} \to \SO_{4nm}
\]
\[
\Spin_n\times\Spin_m \twoheadrightarrow \SO_n\times \SO_m \overset{\rho_{\SO}}{\to} \SO_{nm}
\]
By doing so we obtain unique maps $\phi_{\SP}$ and $\phi_{\Spin}$ making the following diagrams commute.
\[
\begin{tikzcd}
 & \Spin_{4nm} \arrow[two heads]{d} \\
 \SP_{2n}\times \SP_{2m} \arrow{ur}{\phi_{\SP}} \arrow{r}{\rho_{\SP}} & \SO_{4nm}
\end{tikzcd}
\begin{tikzcd}
\Spin_n\times \Spin_m \arrow[two heads]{d} \arrow{r}{\phi_{\Spin}} & \Spin_{nm} \arrow[two heads]{d} \\
\SO_n\times \SO_m \arrow{r}{\rho_{\SO}} & \SO_{nm}
\end{tikzcd}
\]
Since various properties of $\phi_{\Spin}$ depend on the parities of $n$ and $m$, we also denote it by $\phi_{n,m}$. On the level of $\fieldsep$-points these groups are described by the Chevalley generators given in appendix \ref{SP app} and \ref{Spin app}, so we may describe the maps in terms of Chevalley generators also. The images for the maps $\rho_{\SP}(\fieldsep)$ and $\rho_{\SO}(\fieldsep)$ can be computed explicitly using the matrix representations of $\SP_{2n}(\fieldsep)$ and $\SO_n(\fieldsep)$. These images are recorded in appendix \ref{Images}. It turns out that the images of the lifted maps $\phi_{\SP}$ and $\phi_{\Spin}$ are described with analogous products of Chevalley generators. To verify this we use the following collection of identities which we have included since it is difficult to find such descriptions of $\Spin$ in the literature.

\begin{lem}\label{Spin mult}
Let $n\geq 2$ and let $\Phi$ be the root system of $\Spin_n$. The following relations hold in $\Spin_n(\fieldsep)$ for all $\alpha,\beta \in \Phi$.
\begin{itemize}
\item[(1)] $w_\alpha(t)x_{\beta}(u) w_\alpha(-t) = x_{r_\alpha(\beta)}(ct^{-\langle \beta,\alpha \rangle}u)$
\item[(2)] $w_\alpha(t)w_\beta(u)w_\alpha(-t)= w_{r_\alpha(\beta)}(ct^{-\langle \beta,\alpha \rangle}u)$
\item[(3)] $w_\alpha(t) = w_{-\alpha}(-t^{-1})$
\item[(4)] $w_\alpha(tu) = w_\alpha(t)w_\alpha(-1)w_\alpha(u)$
\item[(5)] $w_\alpha(t) h_\beta(u) w_\alpha(-t) = h_{r_\alpha(\beta)}(u)$
\item[(6)] $h_\alpha(t)^{-1}  = h_{-\alpha}(t)$
\item[(7)] $w_\alpha(t)^2 = h_\alpha(-1)$
\end{itemize}
where $r_\alpha(\beta)=\beta - \tfrac{2(\alpha,\beta)}{(\alpha,\alpha)}\alpha$ is the reflection of the root $\beta$ across the hyperplane perpendicular to $\alpha$, and $c=\pm 1$ depends in each case on $\alpha$ and $\beta$.

Furthermore, if $\alpha,\beta \in \Phi$ are any two roots in type $D$ or two long roots in type $B$ such that $\alpha+\beta \in \Phi$ then
\begin{itemize}
\item[(8)] $h_\alpha(t)h_\beta(t)=h_{\alpha+\beta}(t)$.
\end{itemize}
In type $B$, if $\alpha,\beta \in \Phi$ are a long and short root respectively such that $\alpha+\beta \in \Phi$ then
\begin{itemize}
\item[(9)] $h_\alpha(t)h_\beta(t)=h_{\alpha+2\beta}(t)$.
\end{itemize}
Finally, in type $B$ if $\alpha,\beta \in \Phi$ are two short roots such that $\alpha+\beta \in \Phi$ then
\begin{itemize}
\item[(10)] $h_\alpha(t)h_\beta(t)=h_{\alpha+\beta}(t^2)$.
\end{itemize}
\end{lem}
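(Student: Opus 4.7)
The plan is to separate the identities into two groups. Identities (1)--(7) are standard Chevalley relations that hold universally in any Chevalley group, and can be obtained directly from \cite{Stein}. Specifically, (1) and (2) are the conjugation action of the lifted Weyl element $w_\alpha(t)$ on the unipotent root subgroup generators and on other Weyl elements (these appear as Lemma 20 and its corollaries). Item (3) follows from manipulating the definition $w_\alpha(t) = x_\alpha(t)x_{-\alpha}(-t^{-1})x_\alpha(t)$; (4) is the ``multiplicativity up to a sign'' relation $h_\alpha(tu) = h_\alpha(t) h_\alpha(u)$ rewritten in terms of $w_\alpha$; (5) is the consequence of applying (2) at $u=-1$; (6) follows immediately from (3) and the definition $h_\alpha(t) = w_\alpha(t)w_\alpha(-1)$; and (7) is a direct computation from the definition. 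So for (1)--(7) I would just indicate which lemma in Steinberg each identity is (or is an immediate consequence of).

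The substantive content is in (8)--(10). The approach will be to use the interpretation of $h_\alpha(t)$ as the image of $t$ under the cocharacter $\alpha^{\vee}\colon \Gmult \to \T$, so that
\[
h_\alpha(t)\,h_\beta(t) \;=\; (\alpha^{\vee}+\beta^{\vee})(t),
\]
and identify the sum $\alpha^{\vee}+\beta^{\vee}$ with an integer multiple of an honest coroot $\gamma^{\vee}$. Then $h_\alpha(t)h_\beta(t) = \gamma^{\vee}(t^k) = h_\gamma(t^k)$. The identification depends on the lengths of $\alpha$, $\beta$, and $\alpha+\beta$, which are constrained by the type of the root system.

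For (8), in type D all roots have the same length and in type B the sum of two long roots that is again a root must be long; in either scenario $(\alpha,\alpha)=(\beta,\beta)=(\alpha+\beta,\alpha+\beta)$, so $\alpha^{\vee}+\beta^{\vee} = 2\alpha/(\alpha,\alpha) + 2\beta/(\beta,\beta) = (\alpha+\beta)^{\vee}$. For (9), with $\alpha$ long, $\beta$ short, and $\alpha+\beta$ a root (hence short), I would compute $(\alpha,\beta)$ from the constraint $(\alpha+\beta,\alpha+\beta)=(\beta,\beta)$, then check that $(\alpha+2\beta,\alpha+2\beta) = (\alpha,\alpha)$, so that $\alpha+2\beta$ is long and $(\alpha+2\beta)^{\vee} = 2(\alpha+2\beta)/(\alpha,\alpha) = \alpha^{\vee}+\beta^{\vee}$. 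For (10), two short roots in type B summing to a root must sum to a long root, and a parallel inner-product calculation gives $\alpha^{\vee}+\beta^{\vee} = 2(\alpha+\beta)^{\vee}$, yielding $h_\alpha(t)h_\beta(t) = (\alpha+\beta)^{\vee}(t^2) = h_{\alpha+\beta}(t^2)$.

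The main obstacle is purely bookkeeping: one must be careful about which length possibilities are forced by the assumption that $\alpha+\beta$ is a root in type B, since the naive equality $\alpha^{\vee}+\beta^{\vee}=(\alpha+\beta)^{\vee}$ fails whenever different lengths are involved. Once the correct length in each case is identified, the coroot identification is a short computation, and multiplicativity of $\alpha^{\vee}$ in its argument delivers the factor $t^2$ in (10).
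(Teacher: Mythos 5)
Your approach to (1)--(7) matches the paper's: both derive these identities from Steinberg's relations (R7), (R8) and Lemma~28(a), and your attributions are essentially correct (the only small imprecisions are that (5) requires applying (2) twice, once at $u$ and once at $-1$, before collapsing via (4); and (6) needs (2) in addition to (3), not just the definition of $h_\alpha$).

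For (8)--(10), however, you take a genuinely different route. The paper proves these from scratch using explicit conjugation computations with $w$-elements: it works out the possible root shapes, computes $-\langle\beta,\alpha\rangle$ and $r_\alpha(\beta)$ in each case, and then pushes $h_\alpha(t)$ past $h_\beta(t)$ using identities (2) and (5). You instead invoke the cocharacter interpretation $h_\alpha = \alpha^\vee$ (Steinberg's Lemma~28(b), i.e.\ $\chi(h_\alpha(t)) = t^{\langle\chi,\alpha\rangle}$), so the problem reduces to a purely root-theoretic computation in the coroot lattice: write $\alpha^\vee + \beta^\vee$ as an integer multiple $k\gamma^\vee$ of a coroot, after which $h_\alpha(t)h_\beta(t) = h_\gamma(t^k)$. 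Your coroot computations are correct: same-length roots give $\alpha^\vee+\beta^\vee=(\alpha+\beta)^\vee$; a long $\alpha$ plus a short $\beta$ in type~B with $\alpha+\beta$ short gives $\alpha^\vee+\beta^\vee = \alpha+2\beta = (\alpha+2\beta)^\vee$ with $\alpha+2\beta$ long; two short roots summing to a (necessarily long) root give $\alpha^\vee+\beta^\vee = 2(\alpha+\beta)^\vee$. Your approach is cleaner and more conceptual --- it replaces the combinatorial $w$-manipulations with a two-line coroot identity --- at the modest cost of invoking the pairing of the torus with the weight lattice, which is one more piece of the Steinberg machine than the paper leans on. Both arguments are valid.
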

\begin{proof}
We reference the identities (R-) given \cite[pg. 16]{Stein}. We first note that by (R1): $x_\alpha(t)x_\alpha(u)=x_\alpha(t+u)$, we have $w_\alpha(t)^{-1}=w_\alpha(-t)$. Then points 1),2), and 3) are justified by using the identities
\begin{align*} 
\text{(R7): }& w_\alpha(1)x_\beta(u)w_\alpha(1)^{-1} = x_{r_\alpha(\beta)}(cu) \text{ for some constant } c=\pm 1, \\
\text{(R8): }& h_\alpha(t)x_\beta(u)h_\alpha(t)^{-1} = x_\beta(t^{\langle \beta,\alpha \rangle}u)
\end{align*}
where $\langle \beta,\alpha \rangle = \tfrac{2(\beta,\alpha)}{(\alpha,\alpha)}$. Expanding the $h_\alpha$'s in (R8) and applying (R7) gives
\[
w_\alpha(t) x_{r_\alpha(\beta)}(cu) w_\alpha(-t) = x_\beta(t^{\langle \beta,\alpha \rangle}u).
\]
Inputting $r_\alpha(\beta)$ in the place of $\beta$, and $cu$ in the place of $u$ gives
\[
(1)\; w_\alpha(t)x_{\beta}(u) w_\alpha(-t) = x_{r_\alpha(\beta)}(ct^{-\langle \beta,\alpha \rangle}u).
\]
Applying such conjugations to the $x_\beta(u)$ and $x_{-\beta}(-u^{-1})$ in the expansion of $w_\beta(u)$ yields (2). Then (3) comes from (2) by conjugating $w_\alpha(t)$ with itself,
\[
w_\alpha(t)=w_\alpha(t)w_\alpha(t)w_\alpha(-t) = w_{-\alpha}(ct^{-2}t) = w_{-\alpha}(ct^{-1}).
\]
Then since the constants $c$ are the same in $\Spin$ as in $\SO$, calculations in $\SO$ show that for the above case $c=-1$, and so $w_\alpha(t)=w_{-\alpha}(-t^{-1})$.

Point (4) follows from \cite[Lemma 28(a)]{Stein}: $h_\alpha(tu)=h_\alpha(t)h_\alpha(u)$, by expanding both sides into $w$'s and cancelling the rightmost $w_\alpha(-1)$'s. 

This leads to point (5),
\begin{align*}
w_\alpha(t) h_\beta(u) w_\alpha(-t) &= w_\alpha(t)w_\beta(u)w_\alpha(-t)w_\alpha(t)w_\beta(-1)w_\alpha(-t) \\
&= w_{r_\alpha(\beta)}(ct^{-\langle \beta,\alpha \rangle}u)w_{r_\alpha(\beta)}(-ct^{-\langle \beta,\alpha \rangle}) \\
&= w_{r_\alpha(\beta)}(u)w_{r_\alpha(\beta)}(-1)w_{r_\alpha(\beta)}(ct^{-\langle \beta,\alpha \rangle})w_{r_\alpha(\beta)}(-ct^{-\langle \beta,\alpha \rangle})\\
&=h_{r_\alpha(\beta)}(u).
\end{align*}
Justifying (6) is an application of (2) and (3).
\begin{align*}
h_\alpha(t)^{-1} &= w_\alpha(-1)^{-1}w_\alpha(t)^{-1} = w_\alpha(1)w_\alpha(-t) = w_{-\alpha}(-1)w_{-\alpha}(t^{-1}) \\
&= w_\alpha(-(-1)^2 t^{-1})w_{-\alpha}(-1) = w_{-\alpha}(t)w_{-\alpha}(-1) = h_{-\alpha}(t). 
\end{align*}
Point (7) follows from (3),(4) and (6).
\begin{align*}
w_\alpha(t)^2 &= w_\alpha(t)w_\alpha(t) = w_\alpha(t)w_\alpha(-1)w_\alpha(-1)w_\alpha(-t) = h_\alpha(t)\big(w_\alpha(t)w_\alpha(1)\big)^{-1} \\
&= h_\alpha(t)\big( w_{-\alpha}(-t^{-1})w_{-\alpha}(-1)\big)^{-1} = h_\alpha(t)h_{-\alpha}(-t^{-1})^{-1} = h_\alpha(t)h_\alpha(-t^{-1}) \\
&= h_\alpha(-1).
\end{align*}
To see (8), consider $\alpha,\beta \in \Phi$, which are any roots in type $D$ or long roots in type $B$ such that $\alpha+\beta \in \Phi$. Without loss of generality they must be of the form $\alpha = \pm e_i + (-1)^m e_j, \beta = (-1)^{m+1}e_j \pm e_k$ with $i\neq k$. Therefore $-\langle \beta,\alpha \rangle = -\langle \alpha,\beta \rangle = 1$ and $r_\alpha(\beta)=r_\beta(\alpha)=\alpha+\beta$. Then using (2) and (5)
\begin{align*}
h_\alpha(t)h_\beta(t) &= w_\alpha(t)w_\alpha(-1)h_\beta(t) = w_\alpha(t)h_{\alpha+\beta}(t)w_\alpha(-1) \\
&= \big(w_\alpha(t)w_{\alpha+\beta}(t)w_\alpha(-t)\big)w_\alpha(t)w_\alpha(-1)\big(w_\alpha(1)w_{\alpha+\beta}(-1)w_\alpha(-1)\big)\\
&=w_\beta(ct^{-\langle \alpha,\alpha+\beta \rangle}t)h_\alpha(t) w_\beta(-c) = w_\beta(c)h_\alpha(t)w_\beta(-c) \\
&= h_{\alpha+\beta}(t)w_\beta(c)w_\beta(-c) = h_{\alpha+\beta}(t).
\end{align*}
For (9), when in type $B$ and $\alpha,\beta \in \Phi$ are a long and short root respectively such that $\alpha+\beta \in \Phi$ then without loss of generality they are of the form $\alpha=\pm e_i +(-1)^m e_j, \beta=(-1)^{m+1}e_j$. Therefore $-\langle \beta,\alpha\rangle = 1, -\langle \alpha,\beta \rangle = 2$ and so $r_\alpha(\beta)=\alpha+\beta, r_\beta(\alpha)=\alpha+2\beta$. The argument is then the same as the one above except at the second last step.
\[
h_\alpha(t)h_\beta(t)=\ldots=w_\beta(c)h_\alpha(t)w_\beta(-c) = h_{\alpha+2\beta}(t)w_\beta(c)w_\beta(-c)=h_{\alpha+2\beta}(t).
\]
Finally (10) follows from (6) and (9). When in type $B$ if $\alpha,\beta \in \Phi$ are two short roots such that $\alpha+\beta \in \Phi$ then $\alpha=\pm e_i, \beta= \pm e_j$ with $i\neq j$. Therefore $-\alpha+\beta$ is a long root and so
\[
h_\alpha(t)h_\beta(t) = h_\alpha(t)h_{-\alpha+\beta}(t)h_{\alpha-\beta}(t)h_\beta(t) = h_{\alpha+\beta}(t)h_{\alpha+\beta}(t) = h_{\alpha+\beta}(t^2).\qedhere
\]
\end{proof}

\begin{prop}\label{main theorem}
The group scheme homomorphisms $\phi_{\SP}$ and $\phi_{\Spin}$ act analogously on Chevalley generators as the homomorphisms $\rho_{\SP}$ and $\rho_{\SO}$ respectively. For example
\[
\rho_{2n,2m}((x_{e_i - e_j}(t),1) :=  \prod_{k=1}^{2m} x_{e_{(i-1)2m+k}-e_{(j-1)2m+k}}(t)
\]
where the $x_\alpha(t)$ appearing are generators of $\SO$, and therefore
\[
\phi_{2n,2m}((x_{e_i - e_j}(t),1) :=  \prod_{k=1}^{2m} x_{e_{(i-1)2m+k}-e_{(j-1)2m+k}}(t)
\]
where now the $x_\alpha(t)$ appearing represent generators of $\Spin$. All images of $\phi_{\SP}$ and $\phi_{\Spin}$ are obtained similarly.
\end{prop}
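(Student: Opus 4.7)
The plan is to exploit the uniqueness of lifts of one-parameter subgroups through the central isogenies $\Spin_n\twoheadrightarrow\SO_n$. For each Chevalley generator $g$ of the source, whether $g=x_\alpha(t)$ or $g=h_\alpha(t)$, I would restrict $\phi_{\SP}$ (resp.\ $\phi_{\Spin}$) to the one-parameter subgroup determined by $g$, obtaining an algebraic group morphism $\mathbb{G}_a\to\Spin_{nm}$ (resp.\ $\mathbb{G}_m\to\Spin_{nm}$). Its composition with the central projection is already recorded in appendix \ref{Images} as an explicit product of Chevalley generators in $\SO_{nm}$; the candidate for the image under the lift is then the identical product, read inside $\Spin_{nm}$.

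First I would verify that the candidate is itself an algebraic group homomorphism. For a product $\prod_k h_{\beta_k}(t^{d_k})$ this is automatic, since the $h_\beta$'s all commute pairwise in the maximal torus and each factor is a cocharacter. For a product $\prod_k x_{\beta_k}(t)$ it suffices to check that the roots $\beta_k$ are pairwise non-summing, i.e.\ $\beta_k+\beta_l\notin\Phi$ for all $k\neq l$; then Steinberg's commutator relation makes the factors commute and the product defines a morphism $\mathbb{G}_a\to\Spin_{nm}$. The displayed example $\prod_{k=1}^{2m} x_{e_{(i-1)2m+k}-e_{(j-1)2m+k}}(t)$ is representative: pairwise sums of the participating roots involve four distinct basis vectors and hence are not roots in types $B$ or $D$. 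Each family of generators arising from $\SP_{2n}\times\SP_{2m}$ or $\Spin_n\times\Spin_m$ would need to be treated by a similar combinatorial inspection.

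Once the candidate is known to be a homomorphism with the correct projection, I would invoke uniqueness to identify it with $\phi(g)$. For $g=x_\alpha(t)$, both the lift and the candidate are morphisms $\mathbb{G}_a\to\Spin_{nm}$ that agree after projection; their pointwise quotient takes values in the finite central kernel, and by centrality this quotient is itself a homomorphism $\mathbb{G}_a\to Z(\Spin_{nm})$. Since $\mathbb{G}_a$ is unipotent while $Z(\Spin_{nm})$ is of multiplicative type, this homomorphism must be trivial, and so the candidate equals $\phi(x_\alpha(t))$. For $g=h_\alpha(t)$ the analogous conclusion follows from injectivity of the induced map $X_*(\Spin_{nm})\hookrightarrow X_*(\SO_{nm})$ on cocharacter lattices: two cocharacters of $\Spin_{nm}$ coinciding after projection necessarily agree.

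The hard part will be the combinatorial verification of the first step: for each of the several families of generators occurring in the images of $\rho_{\SP}$ and $\rho_{\SO}$, one must confirm that the participating roots have no pairwise sums inside the root system, and the cocharacter identities for the $h$-type generators must be matched to the powers $t^{d_k}$ read off from the Kronecker product of diagonal matrices. This branches by the ambient root type ($B_n$, $C_n$, or $D_n$) and by whether the source generator is long or short, making the bulk of the proof a disciplined case analysis rather than a single conceptual step.
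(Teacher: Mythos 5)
Your approach is correct but genuinely different from the paper's. The paper defines candidate homomorphisms $f$ and $g$ on all Chevalley generators simultaneously and verifies that they give well-defined homomorphisms via Steinberg's presentation \cite[Theorem 8]{Stein} — which entails checking additivity of $x_\alpha$-images, multiplicativity of $h_\alpha$-images, \emph{and} preservation of all the commutator relations of appendices \ref{SP app} and \ref{Spin app} — then checks Galois equivariance for descent, and concludes $f=\phi_{\SP}$, $g=\phi_{\Spin}$ by the uniqueness clause of \cite[Proposition 2.24(i)]{BT}. You instead take $\phi_{\SP},\phi_{\Spin}$ as already existing via the existence clause of Borel--Tits and pin them down one generator at a time by comparison with the candidate lift. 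This is a real saving: the commutator verifications disappear entirely, since they are inherited from $\phi$ being a homomorphism. In fact your argument can be streamlined further than you state: because $\mathbb{G}_a$ and $\mathbb{G}_m$ are connected and $\ker(\Spin_{nm}\twoheadrightarrow\SO_{nm})\cong\mu_2$ is finite \'etale in characteristic $\neq 2$, the morphism $t\mapsto\phi(x_\alpha(t))\cdot c_\alpha(t)^{-1}$ lands in a discrete group, hence is constant, hence equals its value $1$ at $t=0$ (resp.\ $t=1$) — with no need for the candidate $c_\alpha$ to be a homomorphism of one-parameter groups at all. This eliminates both the ``pairwise non-summing'' check and the cocharacter-lattice argument. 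That streamlining is worth noting because your claim that the displayed long-root case is ``representative'' does not actually hold for the short-root images in type $B$ (last rows of the $\SO_{2n}\times\SO_{2m+1}$ and $\SO_{2n+1}\times\SO_{2m+1}$ tables in appendix \ref{Images}), where the participating roots do sum pairwise and the arguments are $\sqrt{2}u$ and $u^2$ rather than a common $t$; the connectedness argument handles those cases without any further computation, whereas your stated route would require redoing essentially the additivity check the paper performs with lemma \ref{Spin mult}.
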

\begin{proof}
First, define homomorphisms 
\[
f\colon \SP_{2n}\times\SP_{2m}\to \Spin_{4nm} \text{ and } g\colon \Spin_n\times \Spin_m \to \Spin_{nm}
\]
to act analogously to $\rho_{\SP}$ and $\rho_{\SO}$ respectively on $\fieldsep$-points as recorded in appendix \ref{Images}. Since these maps are between simply connected groups, to verify they are well-defined it is sufficient via \cite[Theorem 8]{Stein} to check that the image of any $x_\alpha(t)$ is again additive in $t$, the image of any $h_\alpha(t)$ remains multiplicative in $t$, and that the commutator relations given in the appendices are preserved. This can be verified using the relations in $\Spin$ given in appendix \ref{Spin app} and the relations of lemma \ref{Spin mult}. We note that $f$ and $g$ both preserve the action of the Galois group $\Gal(\fieldsep/\field)$ and therefore descend to full group scheme homomorphisms. Since the natural projection $\Spin_n \twoheadrightarrow \SO_n$ sends a Chevalley generator $x_\alpha(t)\in \Spin_n(\fieldsep)$ to the analogous generator $x_\alpha(t)\in\SO_n(\fieldsep)$, $f$ and $g$ clearly make the following diagrams commute.
\[
\begin{tikzcd}
 & \Spin_{4nm} \arrow[two heads]{d} \\
 \SP_{2n}\times \SP_{2m} \arrow{ur}{f} \arrow{r}{\rho_{\SP}} & \SO_{4nm}
\end{tikzcd} \quad
\begin{tikzcd}
\Spin_{n}\times \Spin_{m} \arrow[two heads]{d} \arrow{r}{g} & \Spin_{nm} \arrow[two heads]{d} \\
\SO_{n}\times \SO_{m} \arrow{r}{\rho_{\SO}} & \SO_{nm}
\end{tikzcd}
\]
Finally, since \cite[Proposition 2.24(i)]{BT} produces unique maps, we must have that $f=\phi_{\SP}$ and $g=\phi_{\Spin}$.
\end{proof}
\begin{rem}
We note that the maps $\phi_{\SP}$ and $\phi_{\Spin}$ also exist when $\field$ is characteristic 2, however it is unclear to us whether the techniques used in proposition \ref{main theorem} apply in that case.
\end{rem}

\begin{cor}\label{cor:max tori}
The maps $\phi_{\SP},\phi_{\Spin}$ restrict to maps between maximal tori. For long roots ($\pm e_i \pm e_j$ or $\pm2e_i$) the images of $h_\alpha(t)$ are products of $h's$ analogous to the product of $x's$ in the image of $x_\alpha(t)$. For short roots in type B, the images are 
\begin{align*}
\phi_{2n,2m+1}(1,h_{e_i}(u)) &=\prod_{k=0}^{n-1} h_{e_{[k]+i}-e_{[k]+\overline{i}}}(u^2)\\
\phi_{2n+1,2m+1}(h_{e_i}(t),1) &= \prod_{k=1}^{2m+1} h_{e_{[i-1]+k}}(t) \\
\phi_{2n+1,2m+1}(1,h_{e_i}(u)) &= h_{e_{[n]+i}}(u)\prod_{k=0}^{n-1} h_{e_{[k]+i}}(u) h_{e_{[k]+\overline{i}}}(u^{-1})
\end{align*}
where $[y]=(2m+1)y$ and $\overline{y}=2m+2-y$.
\end{cor}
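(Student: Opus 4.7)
The strategy is to combine Proposition \ref{main theorem} with the definitional identities $h_\alpha(t) = w_\alpha(t) w_\alpha(-1)$ and $w_\alpha(t) = x_\alpha(t) x_{-\alpha}(-t^{-1}) x_\alpha(t)$. Since $\phi_{\SP}$ and $\phi_{\Spin}$ are group homomorphisms, the formula for $\phi(x_\alpha(t))$ automatically determines $\phi(w_\alpha(t))$ and hence $\phi(h_\alpha(t))$; the corollary simply makes these induced formulas explicit. Thus the proof is a direct computation using Proposition \ref{main theorem} together with the identities of Lemma \ref{Spin mult}.

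For a long root $\alpha$ of the form $\pm e_i \pm e_j$ (in any of types B, C, D) or $\pm 2 e_i$ (in type C), the image $\phi(x_\alpha(t)) = \prod_k x_{\beta_k}(t)$ consists of generators whose roots $\beta_k$ have pairwise disjoint supports in the standard basis of the codomain. This disjointness ensures that the $x_{\pm \beta_k}(\cdot)$, and hence the $w_{\beta_k}(\cdot)$, all commute pairwise. Substituting into $\phi(w_\alpha(t)) = \phi(x_\alpha(t))\phi(x_{-\alpha}(-t^{-1}))\phi(x_\alpha(t))$ and reordering the factors yields $\phi(w_\alpha(t)) = \prod_k w_{\beta_k}(t)$, whence $\phi(h_\alpha(t)) = \prod_k w_{\beta_k}(t) w_{\beta_k}(-1) = \prod_k h_{\beta_k}(t)$, giving the claimed analogous form.

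For a short root $\alpha = e_i$ in type B, the image $\phi(x_{e_i}(u))$ has a more delicate structure reflecting the quadratic behavior of short-root generators at the matrix level. After substituting the explicit image from Proposition \ref{main theorem} and expanding $\phi(h_{e_i}(u)) = \phi(w_{e_i}(u))\phi(w_{e_i}(-1))$, I would repeatedly invoke the multiplicative identities (8), (9), (10) of Lemma \ref{Spin mult} to collapse the resulting $h$-factors into the stated compact forms. The $u^2$ exponents that appear arise either from two $h$-generators of the same root combining via the basic multiplicativity $h_\gamma(u) h_\gamma(u) = h_\gamma(u^2)$, or from identity (10) combining two short roots in a type B codomain whose sum is a root (the latter being relevant in the $\phi_{2n+1,2m+1}$ situation).

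The main technical obstacle is the case $\phi_{2n+1, 2m+1}(1, h_{e_i}(u))$, in which both the domain and codomain are type B and the target product involves both long-root and short-root $h$-generators. There one must invoke all three identities (8)-(10) simultaneously and carefully track the terms indexed by $[y] = (2m+1)y$ and $\overline{y} = 2m+2-y$ to arrive at the claimed closed form. The long-root case is essentially bookkeeping once commutativity is noted; the short-root case is where all the substantive use of Lemma \ref{Spin mult} is concentrated.
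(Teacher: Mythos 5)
Your proposal is correct and follows essentially the same route as the paper's proof: both observe that for long roots the factors in $\phi(x_\alpha(t))$ commute pairwise, so the formulas for $\phi(w_\alpha(t))$ and $\phi(h_\alpha(t))$ follow term-by-term, while for type~B short roots the non-commuting factors force a direct computation using the additional identities of Lemma~\ref{Spin mult} (and the relations in Steinberg). Your added explanation of where the $u^2$ arises — via $h_\gamma(u)h_\gamma(u)=h_\gamma(u^2)$ or identity (10) — is a helpful elaboration but not a departure from the paper's argument.
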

\begin{proof}
These images can be computed using the explicit descriptions of $\phi_{\SP}$ and $\phi_{\Spin}$ given in proposition \ref{main theorem}. For long roots, the factors in the image of $x_\alpha(t)$ pairwise commute. Therefore $w_\alpha(t)$ maps to an analogous product of $w$'s, and in turn $h_\alpha(t)$ maps to an analogous product of $h$'s. For short roots in type B the image of $x_\alpha(t)$ contains factors which do not commute. In these cases the image of $h_\alpha(t)$ can be computed using additional identities such as those in \cite{Stein} and lemma \ref{Spin mult}.
\end{proof}

\section{Induced Maps Via Central Quotients}\label{sec:induced}
Now that we have our desired liftings we wish to track their behaviour on central elements in order to identify induced maps between central quotients. In particular, we are interested in maps involving $\HSpin$. 

\begin{prop}\label{prop: PSp induced}
Let at least one of $n$ and $m$ be even. Then there exists an injection of group schemes $\phi_{\SP}'$ making the following diagram commute.
\[
\begin{tikzcd}
\SP_{2n}\times \SP_{2m} \arrow[twoheadrightarrow]{d} \arrow{r}{\phi_{\SP}} & \Spin_{4nm} \arrow[twoheadrightarrow]{d}\\
\PSP_{2n}\times\PSP_{2m} \arrow[hookrightarrow]{r}{\phi_{\SP}'} & \HSpin_{4nm}
\end{tikzcd}
\]
\end{prop}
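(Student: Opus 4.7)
The plan is to follow the template of Garibaldi's example in Section \ref{Garibaldi_Example}: the lifted map $\phi_{\SP}$ produced by Proposition \ref{main theorem} is the top arrow of the square, and it remains to verify that $\phi_{\SP}$ sends $Z(\SP_{2n}\times\SP_{2m})$ into $\{1,\xi_1\}$, the kernel of $\Spin_{4nm}\twoheadrightarrow\HSpin_{4nm}$. Granting this, the universal property of the quotients furnishes the dashed map $\phi_{\SP}'$. Injectivity will be automatic from the following observation: since $\xi_1$ does not lie in the kernel $\{1,h_{\alpha_{2nm-1}}(-1)h_{\alpha_{2nm}}(-1)\}$ of $\Spin_{4nm}\twoheadrightarrow\SO_{4nm}$ but is central, it must project to $-I\in\SO_{4nm}$; hence any $\fieldsep$-point of $\phi_{\SP}^{-1}(\{1,\xi_1\})$ is a pair $(A,B)$ with $A\otimes B\in\{I,-I\}$, and standard properties of the Kronecker product force both $A$ and $B$ to be scalar, hence central in $\SP_{2n}\times\SP_{2m}$.

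The heart of the argument is the evaluation of $\phi_{\SP}$ on the two generators of the center, namely $(z_n,I)$ and $(I,z_m)$, where $z_k=\prod_{i\text{ odd},\,i\leq k}h_{\alpha_i}(-1)$ generates $Z(\SP_{2k})(\fieldsep)$ by Section \ref{sec:split groups}. Every simple root $\alpha_i$ of type $C_k$ is a long root of $\SP_{2k}$, so Proposition \ref{main theorem} together with Corollary \ref{cor:max tori} expresses each $\phi_{\SP}(h_{\alpha_i}(-1),I)$ and $\phi_{\SP}(I,h_{\alpha_j}(-1))$ as a product of $h_\beta(-1)$ over exactly the roots $\beta$ of $D_{2nm}$ appearing in the corresponding expressions for $\rho_{\SP}$ recorded in Appendix \ref{Images}. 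I would then collect terms in $\phi_{\SP}(z_n,I)$ and $\phi_{\SP}(I,z_m)$ via the identity $h_\gamma(-1)h_\delta(-1)=h_{\gamma+\delta}(-1)$ from Lemma \ref{Spin mult}(8), valid for adjacent roots in type $D$, telescoping the product into one indexed by simple roots of $D_{2nm}$ that can be compared directly with $\xi_1=\prod_{k\text{ odd},\,k\leq 2nm-1}h_{\alpha_k}(-1)$.

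The main obstacle is the combinatorial bookkeeping at the tail of the $D_{2nm}$ diagram, where the choice between $\xi_1$ and $\xi_2=h_{\alpha_{2nm-1}}(-1)h_{\alpha_{2nm}}(-1)\xi_1$ is decided. The telescoping described above is sensitive to the parity of the indices being summed, and the parity hypothesis on $n$ or $m$ is precisely what is needed for the residual factor sitting near $\alpha_{2nm}$ to cancel: when both $n$ and $m$ are odd the extra factor survives and one of the central generators maps into the complementary coset $\{h_{\alpha_{2nm-1}}(-1)h_{\alpha_{2nm}}(-1),\xi_2\}$, whereas as soon as one of $n,m$ is even the extra factor disappears and $\phi_{\SP}(z_n,I),\phi_{\SP}(I,z_m)\in\{1,\xi_1\}$. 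Once this check succeeds, the first paragraph produces $\phi_{\SP}'$ as the induced injection and the proposition follows.
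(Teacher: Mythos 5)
Your plan follows essentially the same path as the paper's proof: argue via the Kronecker product structure that the preimage of $\{1,\xi_1\}$ under $\phi_{\SP}$ lies in $Z(\SP_{2n}\times\SP_{2m})$, compute $\phi_{\SP}$ on the central generators using Corollary \ref{cor:max tori}, observe that the parity hypothesis makes the result land in $\{1,\xi_1\}$ rather than its complementary coset, and finish by the isomorphism theorem. One minor slip: in type $C_k$ the simple roots $\alpha_1,\ldots,\alpha_{k-1}$ are short (only $\alpha_k=2e_k$ is long), but this is harmless because Corollary \ref{cor:max tori} treats all roots of the form $\pm e_i\pm e_j$ or $\pm 2e_i$ uniformly (their images under $\rho_{\SP}$ have pairwise-commuting factors), so the telescoping of $h$'s you describe still goes through.
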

\begin{proof}
Since the kernel of the map $\rho_{\SP}$ is a central subgroup of $\SP_{2n}\times\SP_{2m}$, and $\rho_{\SP}$ factors through $\Spin_{4nm}$ via $\phi_{\SP}$, we must have that $\ker(\phi_{\SP})$ is contained within the center as well. By direct computation using the results of corollary \ref{cor:max tori} and the description of the centers of $\SP$ and $\Spin$ in section \ref{sec:split groups}, we see that when at least one of $n,m$ is even $\phi_{\SP}$ behaves on elements of the center of $\SP_{2n}\times \SP_{2m}$ as
\[
(I,I)\mapsto 1,\quad (-I,I)\mapsto \xi_1,\quad (I,-I)\mapsto \xi_1,\quad (-I,-I)\mapsto 1.
\]
Thus the kernel of the composition $\SP_{2n}\times \SP_{2m}\rightarrow \Spin_{4nm}\rightarrow \HSpin_{4nm}$ is the center of $\SP_{2n}\times\SP_{2m}$. The first isomorphism theorem then yields the desired injection.
\end{proof}

\begin{prop}\label{prop: PSO induced}
Let at least one of $n$ and $m$ be even. Then there exists an injection of group schemes $\phi_{2n,2m}'$ making the following diagram commute.
\[
\begin{tikzcd}
\Spin_{2n}\times\Spin_{2m} \arrow[twoheadrightarrow]{d} \arrow{r}{\phi_{2n,2m}} & \Spin_{4nm} \arrow[twoheadrightarrow]{d} \\
\PSO_{2n}\times \PSO_{2m} \arrow[hookrightarrow]{r}{\phi_{2n,2m}'} & \HSpin_{4nm}
\end{tikzcd}
\]
\end{prop}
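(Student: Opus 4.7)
The plan is to mirror the proof of proposition \ref{prop: PSp induced}. Since $\phi_{2n,2m}$ lifts $\rho_{\SO}$ and $\ker(\rho_{\SO})$ is central in $\SO_{2n}\times\SO_{2m}$, the kernel of $\phi_{2n,2m}$ lies in $Z(\Spin_{2n})\times Z(\Spin_{2m})$. Composing with the surjection $\Spin_{4nm}\twoheadrightarrow\HSpin_{4nm}$, whose kernel is $\{1,\xi_1\}$, the kernel of the total composition $\Spin_{2n}\times\Spin_{2m}\to\HSpin_{4nm}$ is again central. The first isomorphism theorem will then supply the desired injection $\phi_{2n,2m}'$ as soon as we verify that this kernel equals the whole center, equivalently, that $\phi_{2n,2m}$ sends every central element of $\Spin_{2n}\times\Spin_{2m}$ into $\{1,\xi_1\}\subseteq\Spin_{4nm}$.

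Without loss of generality I would take $m$ to be even, so that $Z(\Spin_{2m})=\{1,\xi_1^{(2m)},h_{\alpha_{m-1}}(-1)h_{\alpha_m}(-1),\xi_2^{(2m)}\}$. The description of $Z(\Spin_{2n})$ then splits into two subcases: it has the analogous $\mu_2\times\mu_2$ form when $n$ is even, and is cyclic of order four generated by $\zeta=\prod_{k=1,\,k\text{ odd}}^{n-2}h_{\alpha_k}(-1)\cdot h_{\alpha_{n-1}}(i)h_{\alpha_n}(-i)$ when $n$ is odd. For each central generator, corollary \ref{cor:max tori} supplies an explicit image in $\Spin_{4nm}$: each factor $h_{\alpha_k}(t)$ on a long root of $\Spin_{2n}$ or $\Spin_{2m}$ is sent to a product of $h$'s on long roots of $\Spin_{4nm}$, combinatorially parallel to the image of the corresponding $x_{\alpha_k}(t)$ given in proposition \ref{main theorem}. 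Applying lemma \ref{Spin mult}(8) to collapse adjacent long-root $h(-1)$ factors into $h_{\alpha+\beta}(-1)$, one reduces each central-generator image to a canonical form and matches it against $1$ and $\xi_1=\prod_{k\text{ odd},\,k\leq 2nm-1}h_{\alpha_k}(-1)$.

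The main obstacle is the index bookkeeping in these telescoping simplifications, with two delicate points. First, in the $n$-odd case the factors $h_{\alpha_{n-1}}(i)h_{\alpha_n}(-i)$ in $\zeta$ produce images with scalars $\pm i$ that must combine consistently with the fact that $\zeta^2=h_{\alpha_{n-1}}(-1)h_{\alpha_n}(-1)$ must itself land in $\{1,\xi_1\}$. Second, the element $h_{\alpha_{n-1}}(-1)h_{\alpha_n}(-1)$ generating $\ker(\Spin_{2n}\twoheadrightarrow\SO_{2n})$ must map specifically into $\{1,\xi_1\}$ rather than to the analogous kernel generator $h_{\alpha_{2nm-1}}(-1)h_{\alpha_{2nm}}(-1)$ of $\Spin_{4nm}$; this is precisely where the parity hypothesis on one of $n,m$ is needed, since the parity controls how the long-root products in the image pair up under lemma \ref{Spin mult}(8). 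Once every central generator is checked case by case to land in $\{1,\xi_1\}$, the kernel of the composition is exactly $Z(\Spin_{2n})\times Z(\Spin_{2m})$, and the first isomorphism theorem produces the required injection $\phi_{2n,2m}'\colon \PSO_{2n}\times\PSO_{2m}\hookrightarrow\HSpin_{4nm}$ making the diagram commute.
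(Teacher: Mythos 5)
Your proposal follows the paper's proof almost step for step: establish that $\ker\phi_{2n,2m}$ is central by comparing with $\rho_{\SO}$ through the commuting square, compute the images of the central generators using corollary \ref{cor:max tori}, and conclude via the first isomorphism theorem once every generator lands in $\{1,\xi_1\}$. One small misattribution in your second ``delicate point'': the element $(h_{\alpha_{n-1}}(-1)h_{\alpha_n}(-1),1)$ is sent to $1$ unconditionally, since it is the square of a central element that projects to $-I\in\SO_{4nm}$ and hence lands in $\{\xi_1,\xi_2\}\subset Z(\Spin_{4nm})$, both of order two; the parity hypothesis is really what ensures that $\xi_1$, $\xi_2$, or $\zeta$ themselves hit $\xi_1$ rather than $\xi_2$, which is exactly the computation the paper tabulates.
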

\begin{proof}
The kernel of $\phi_{2n,2m}$ is a subset of the kernel of the composition $\Spin_{2n}\times\Spin_{2m}\to \Spin_{4nm}\to \SO_{4nm}$ which is the same map as the composition $\Spin_{2n}\times\Spin_{2m}\to \SO_{2n}\times\SO_{2m} \to \SO_{4nm}$ by construction. The kernel of $\rho_{2n,2m}$ is a central subgroup of $\SO_{2n}\times\SO_{2m}$ and the inverse of $Z(\SO_{2n}\times\SO_{2m})$ under the natural projection is $Z(\Spin_{2n}\times\Spin_{2m})$. Thus the kernel of $\phi_{2n,2m}$ is a central subgroup of $\Spin_{2n}\times\Spin_{2m}$. Using the description of the center of $\Spin$ in section \ref{sec:split groups} and corollary \ref{cor:max tori} we compute that $\phi_{2n,2m}$ behaves on generators of the center as follows, depending on the parities of $n$ and $m$.
\[
\begin{tabular}{r@{}l|r@{}l|r@{}l}
\multicolumn{2}{c}{$n \text{ even } \atop m \text{ even}$}&\multicolumn{2}{c}{$n \text{ odd } \atop m \text{ even}$}&\multicolumn{2}{c}{$n \text{ even } \atop m \text{ odd}$}\\
$(\xi_1,1)$ &$\mapsto \xi_1$ & $(\zeta,1)$ & $\mapsto \xi_1$ & $(\xi_1,1)$ & $\mapsto\xi_1$  \\
$(\xi_2,1)$ &$\mapsto \xi_1$ & $(1,\xi_1)$ & $\mapsto \xi_1$ & $(\xi_2,1)$ & $\mapsto \xi_1$ \\
$(1,\xi_1)$ &$\mapsto \xi_1$ & $(1,\xi_2)$ & $\mapsto \xi_1$ & $(1,\zeta)$ & $\mapsto \xi_1$ \\ 
$(1,\xi_2)$ &$\mapsto \xi_1$ & & & \\
\end{tabular}
\]
In all cases we see that the kernel of the composition 
\[
\Spin_{2n}\times\Spin_{2m} \to \Spin_{4nm}\to\HSpin_{4nm}
\]
is the center of $\Spin_{2n}\times\Spin_{2m}$. The first isomorphism theorem then yields the desired injection.
\end{proof}

\begin{prop}\label{prop:HSpin induced}
Let $n$ be even. Then there exists an injection of group schemes $\phi_{2n,2m+1}'$ making the following diagram commute.
\[
\begin{tikzcd}
\Spin_{2n}\times\Spin_{2m+1} \arrow[twoheadrightarrow]{d} \arrow{r}{\phi_{2n,2m+1}} & \Spin_{2n(2m+1)} \arrow[twoheadrightarrow]{d} \\
\HSpin_{2n}\times \SO_{2m+1} \arrow[hookrightarrow]{r}{\phi_{2n,2m+1}'} & \HSpin_{2n(2m+1)}
\end{tikzcd}
\]
\end{prop}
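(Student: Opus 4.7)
The plan is to follow the template of Propositions \ref{prop: PSp induced} and \ref{prop: PSO induced}. First, the kernel of the composition $\Spin_{2n} \times \Spin_{2m+1} \to \Spin_{2n(2m+1)} \to \HSpin_{2n(2m+1)}$ lies in $Z(\Spin_{2n} \times \Spin_{2m+1})$: it is contained in the kernel of the further composition down to $\PSO_{2n(2m+1)}$, which by construction equals $\Spin_{2n} \times \Spin_{2m+1} \to \SO_{2n} \times \SO_{2m+1} \to \PSO_{2n(2m+1)}$, and the latter has kernel the preimage of $Z(\SO_{2n} \times \SO_{2m+1})$, namely $Z(\Spin_{2n} \times \Spin_{2m+1})$. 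With $n$ even, this center is $\{1, \xi_1, c, \xi_2\} \times \{1, h_{\alpha_m}(-1)\}$, where I write $c := h_{\alpha_{n-1}}(-1) h_{\alpha_n}(-1)$.

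Next I would compute the image of each of the eight central generators under $\phi_{2n,2m+1}$ using Corollary \ref{cor:max tori}. The $\Spin_{2m+1}$ factor is immediate: the short-root formula with $u = -1$ yields
\[
\phi_{2n,2m+1}(1, h_{\alpha_m}(-1)) = \prod_{k=0}^{n-1} h_{e_{[k]+m} - e_{[k]+\overline{m}}}((-1)^2) = 1,
\]
so this element is trivial already in $\Spin_{2n(2m+1)}$. For the three non-trivial elements of $Z(\Spin_{2n})$, the long-root clause of Corollary \ref{cor:max tori} expresses their images as explicit products of factors $h_\beta(-1)$ in $\Spin_{2n(2m+1)}$, indexed over odd $k \leq n-1$ (for $\xi_1$ and $\xi_2$) and $\ell = 1, \ldots, 2m+1$, which I would then collapse into central elements of $\Spin_{2n(2m+1)}$ via points (6)--(8) of Lemma \ref{Spin mult}.

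The decisive step is identifying which of these images lie in $\{1, \xi_1'\} = \ker(\Spin_{2n(2m+1)} \twoheadrightarrow \HSpin_{2n(2m+1)})$, where $\xi_1', \xi_2', c'$ denote the analogues of $\xi_1, \xi_2, c$ in $\Spin_{2n(2m+1)}$. Paralleling Proposition \ref{prop: PSO induced}, but with the odd parity of $2m+1$ preserving (rather than cancelling) the distinction between $\xi_1'$ and $\xi_2'$, the expected outcome is $(\xi_1, 1) \mapsto \xi_1'$, $(c, 1) \mapsto c'$, and by multiplicativity $(\xi_2, 1) \mapsto \xi_1' c' = \xi_2'$. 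Since $c'$ and $\xi_2'$ lie outside $\{1, \xi_1'\}$, exactly the four central elements in $\{1, \xi_1\} \times \{1, h_{\alpha_m}(-1)\}$ map into $\{1, \xi_1'\}$. The kernel of the composition is then $\{1, \xi_1\} \times \{1, h_{\alpha_m}(-1)\}$, which coincides with $\ker(\Spin_{2n} \times \Spin_{2m+1} \twoheadrightarrow \HSpin_{2n} \times \SO_{2m+1})$, and the first isomorphism theorem produces the desired injection $\phi_{2n,2m+1}'$.

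The main obstacle is the combinatorial collapse: $(\xi_1, 1)$ expands to a product of $\tfrac{n}{2}(2m+1)$ factors $h_{e_{[k-1]+\ell} - e_{[k]+\ell}}(-1)$ over odd $k \leq n-1$ and $\ell = 1, \ldots, 2m+1$, and verifying that this simplifies to $\xi_1'$ — and similarly that $(c, 1) \mapsto c'$ rather than $1$ — requires careful regrouping using Lemma \ref{Spin mult}(8). The odd parity of $2m+1$ is precisely what blocks the cancellation $(\xi_2, 1) \mapsto \xi_1'$ that occurred in Proposition \ref{prop: PSO induced}, and is what makes the stronger conclusion $\HSpin_{2n} \times \SO_{2m+1} \hookrightarrow \HSpin_{2n(2m+1)}$ possible rather than merely $\PSO_{2n} \times \SO_{2m+1} \hookrightarrow \HSpin_{2n(2m+1)}$.
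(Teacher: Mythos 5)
Your proposal is correct and follows essentially the same route as the paper: reduce to checking central elements (via the composition down to the adjoint image), use Corollary \ref{cor:max tori} to track where the generators of $Z(\Spin_{2n})\times Z(\Spin_{2m+1})$ go, observe $(1,h_{\alpha_m}(-1))\mapsto 1$ immediately from the short-root formula, and conclude that the images of $(\xi_1,1),(\xi_2,1)$ are $\xi_1',\xi_2'$ respectively so that the kernel of the composite is exactly $\{1,\xi_1\}\times Z(\Spin_{2m+1})$. The paper states these central images without carrying out the $h_{\alpha}$-collapse explicitly, just as you do; your added remarks on why the odd factor preserves the $\xi_1'/\xi_2'$ distinction and why the kernel is central (going through $\PSO$ rather than $\SO$) are harmless variations of the same argument.
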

\begin{proof}
By similar arguments as in the proof of proposition \ref{prop: PSO induced} above, the kernel of $\phi_{2n,2m+1}$ is a central subgroup of $\Spin_{2n}\times\Spin_{2m+1}$. Using the description of the center of $\Spin$ in section \ref{sec:split groups} and corollary \ref{cor:max tori} we compute that $\phi_{2n,2m+1}$ behaves on generators of the center of $\Spin_{2n}\times\Spin_{2m+1}$ as
\[
(\xi_1,1) \mapsto \xi_1, \quad (\xi_2,1)\mapsto \xi_2, \quad (1,h_{\alpha_n}(-1)) \mapsto 1.
\]
Thus the kernel of the composition 
\[
\Spin_{2n}\times\Spin_{2m+1}\to\Spin_{2n(2m+1)}\to\HSpin_{2n(2m+1)}
\]
is $\{1,\xi_1\}\times Z(\Spin_{2m+1})$. The first isomorphism theorem then yields the desired injection.
\end{proof}

\section{Computing Rost Multipliers}\label{sec:Rost}
In what follows we compute the Rost multipliers of the maps constructed in section \ref{sec:induced}. For each of our groups, we choose the standard simple system of roots $\{\alpha_1,\ldots,\alpha_n\}$ as in \cite{Bou4-6} where $n$ is the rank of the group. and then consider the maximal torus given by
\[
\T(\field)=\langle \prod_{i=1}^{n} h_{\alpha_i}(t_i) \mid t_i \in \field^\times \rangle.
\]
In all types, for $\sigma\in \Galgroup=\Gal(\fieldsep/\field)$ the action on $h$'s is by $\sigma(h_\alpha(t))=h_\alpha(\sigma(t))$. Therefore the group $\T(\field)$ consists of products of $h$'s with arguments from $\field$. The groups $\T^*$ are then
\[
\begin{array}{l|l}
\begin{array}{l}\T^*_{\Spin_{2n}} \\ \T^*_{\Spin_{2n+1}} \end{array} & \Z\langle e_1,\ldots,e_{n-1}, \frac{1}{2}(e_1+\ldots+e_n) \rangle \\ \hline
\begin{array}{l}\T^*_{\SO_{2n}} \\ \T^*_{\SO_{2n+1}} \\ \T^*_{\SP_{2n}} \end{array} & \Z\langle e_1,\ldots,e_n \rangle \\ \hline
\T^*_{\HSpin_{2n}} & \{ \sum_{i=1}^{n-1}c_i e_i + c_n \frac{1}{2}(e_1+\ldots e_n) \mid c_i\in\Z,\sum_{i=1}^{n-1} c_i \text{ even}\} \\ \hline
\begin{array}{l} \T^*_{\PSO_{2n}} \\ \T^*_{\PSP_{2n}} \end{array} & \{ \sum_{i=1}^n c_i e_i \mid c_i\in\Z, \sum_{i=1}^n c_i \text{ even}\}.
\end{array}
\]

where the characters act as follows by type.
\[
\begin{array}{r|ll}
D_n & e_1(\prod h_{\alpha_i}(t_i))=t_1\\
& e_j(\prod h_{\alpha_i}(t_i))=t_{j-1}^{-1}t_j &\text{ for } 2\leq j\leq n-2 \text{ or } j=n\\
& e_{n-1}(\prod h_{\alpha_i}(t_i))=t_{n-2}^{-2}t_{n-1}t_n \\
& \frac{1}{2}(e_1+\ldots+e_n)(\prod h_{\alpha_i}(t_i))=t_n \vspace{3pt}\\ \hline 
B_n & e_1(\prod h_{\alpha_i}(t_i))=t_1 \\
 & e_j(\prod h_{\alpha_i}(t_i))=t_{j-1}^{-1}t_j &\text{ for } 2\leq j\leq n-1\\
 & e_n(\prod h_{\alpha_i}(t_i))=t_{n-1}^{-1}t_n^2 \\
 &\frac{1}{2}(e_1+\ldots+e_n)(\prod h_{\alpha_i}(t_i))=t_n \vspace{3pt}\\ \hline
C_n & e_1(\prod h_{\alpha_i}(t_i))=t_1 \\
 & e_j(\prod h_{\alpha_i}(t_i))=t_{j-1}^{-1}t_j &\text{ for } 2\leq j\leq n\\
\end{array}
\]
Each $\T^*$ contains the root system of its group in the usual way and therefore we can compute the normalized Killing forms by finding the smallest multiple of $\sum_{\alpha\in \Phi}\alpha^2$ present in $\Sym^2(\T^*)$. This yields the following.
\[
\begin{array}{c|c}
q_{\SO_{2n}}=q_{\SO_{2n+1}}=q_{\SP_{2n}} & q_{\PSP_{2n}} = q_{\PSO_{2n}}\\ \hline
\sum_{i=1}^n e_i^2 & \begin{array}{rl} \sum_{i=1}^n e_i^2 & n\cong 0 \pmod{4} \\ 2\sum_{i=1}^n e_i^2 & n \cong 2 \pmod{4} \\ 4\sum_{i=1}^n e_i^2 & n \cong 1,3 \pmod{4} \end{array}
\end{array}
\]
\[
\begin{array}{c|c}
q_{\Spin_{2n}}=q_{\Spin_{2n+1}} & q_{\HSpin_{4n}} \\ \hline
\frac{1}{2}\sum_{i=1}^n e_i^2 & \begin{array}{rl} \frac{1}{2}\sum_{i=1}^{2n} e_i^2 & n\cong 0 \pmod{4} \\ \sum_{i=1}^{2n} e_i^2 & n \cong 2 \pmod{4} \\ 2\sum_{i=1}^{2n} e_i^2 & n \cong 1,3 \pmod{4} \end{array}
\end{array}
\]
Since these are all rational multiples of $q_{\SO}=q_{\SP}$ it will be sufficient to describe the images $\rho_{\SP}^*(q_{\SP})$ and $\rho_{\SO}^*(q_{\SO})$ where $\rho_{\SP}\colon \SP_{2n}\times\SP_{2m}\to\SO_{4nm}$ and $\rho_{\SO}\colon \SO_{n}\times\SO_{m}\to\SO_{nm}$ are the tensor product maps from section \ref{sec:liftings}. From these, all other images can be extrapolated.

\begin{lem}\label{lem:SP restriction}
Let $\rho_{\SP}$ and $\phi_{\SP}$ be the split version of the maps from section \ref{sec:liftings}. Then the map $\rho_{\SP}^*\colon \T^*_{\SO_{4nm}} \rightarrow \T^*_{\SP_{2n}}\oplus T^*_{\SP_{2m}}$ is the restriction of the map $\phi_{\SP}^*\colon \T^*_{\Spin_{4nm}} \rightarrow \T^*_{\SP_{2n}}\oplus T^*_{\SP_{2m}}$ to the subgroup $\T^*_{\SO_{4nm}}\leq  \T^*_{\Spin_{4nm}}$.
\end{lem}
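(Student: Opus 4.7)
The plan is to exploit functoriality of the character-lattice functor applied to the commutative triangle
\[
\begin{tikzcd}
 & \Spin_{4nm} \arrow[two heads]{d}{\pi} \\
 \SP_{2n}\times \SP_{2m} \arrow{ur}{\phi_{\SP}} \arrow{r}{\rho_{\SP}} & \SO_{4nm}
\end{tikzcd}
\]
produced in section \ref{sec:liftings}. First I would verify that all three arrows restrict consistently to the standard split maximal tori. For $\phi_{\SP}$ and $\rho_{\SP}$ this is precisely the content of corollary \ref{cor:max tori}, and for the simply connected covering $\pi$ it is standard. Applying the contravariant functor $\Hom(-,\Gmult)$ to this triangle of tori then yields the identity
\[
\rho_{\SP}^* \;=\; \phi_{\SP}^*\circ \pi^*\colon\ \T^*_{\SO_{4nm}}\longrightarrow \T^*_{\SP_{2n}}\oplus \T^*_{\SP_{2m}}.
\]

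The second step is to make explicit the embedding $\pi^*\colon \T^*_{\SO_{4nm}}\hookrightarrow \T^*_{\Spin_{4nm}}$. From the tables preceding the lemma, $\T^*_{\Spin_{4nm}} = \Z\langle e_1,\ldots,e_{2nm-1},\tfrac{1}{2}(e_1+\cdots+e_{2nm})\rangle$ and $\T^*_{\SO_{4nm}}=\Z\langle e_1,\ldots,e_{2nm}\rangle$, and $\pi^*$ is the inclusion of the latter as the index-two sublattice of the former consisting of weights with integral coefficients on each $e_i$. This is exactly the identification under which the lemma compares $\rho_{\SP}^*$ with the restriction of $\phi_{\SP}^*$, so combined with the displayed identity above the lemma follows at once. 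The argument is a pure bookkeeping exercise and presents no substantive obstacle; its role is to reduce the Rost multiplier computation for $\phi_{\SP}$ to the matrix-theoretic computation of $\rho_{\SP}^*$, which in turn is transparent from the explicit description of the Kronecker tensor product in section \ref{sec:liftings}.
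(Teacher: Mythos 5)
Your proposal is correct and takes essentially the same route as the paper's proof: restrict the commutative triangle from section \ref{sec:liftings} to maximal tori, dualize, and read off $\rho_{\SP}^*=\phi_{\SP}^*\circ\pi^*$ with $\pi^*$ the lattice inclusion $\T^*_{\SO_{4nm}}\hookrightarrow\T^*_{\Spin_{4nm}}$. You simply spell out the lattice identification more explicitly than the paper does; otherwise the arguments coincide.
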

\begin{proof}
Restricting the commutative diagram on the left to maximal tori and then dualizing yields the adjacent diagram on the right,
\[
\begin{tikzcd}
 & \Spin_{4nm} \arrow[twoheadrightarrow]{d} \\
\SP_{2n} \times \SP_{2m} \arrow{ur}{\phi_{\SP}} \arrow{r}{\rho_{\SP}} & \SO_{4nm}
\end{tikzcd}
\begin{tikzcd}
 & \T^*_{\Spin_{4nm}} \arrow{dl}[swap]{\phi_{\SP}^*} \\
\T^*_{\SP_{2n}} \oplus \T^*_{\SP_{2m}} & \T^*_{\SO_{4nm}} \arrow[hookrightarrow]{u} \arrow{l}[swap]{\rho_{\SP}^*}
\end{tikzcd}
\]
from which the claim becomes clear. $\phi_{\SP}^*$ agrees with $\rho_{\SP}^*$ on the subgroup $\T^*_{\SO_{4nm}}$.
\end{proof}

\begin{lem}\label{lem:Spin restriction}
Let $\rho_{\SO}$ and $\phi_{\Spin}$ be the split version of the maps from section \ref{sec:liftings}. Then the map $\rho_{\SO}^*\colon \T^*_{\SO_{nm}} \rightarrow \T^*_{\SO_n}\oplus T^*_{\SO_m}$ can be identified with the restriction of the map $\phi_{\Spin}^*\colon \T^*_{\Spin_{nm}} \rightarrow \T^*_{\Spin_n}\oplus T^*_{\Spin_m}$ to the subgroup $\T^*_{\SO_{nm}}\leq  \T^*_{\Spin_{nm}}$.
\end{lem}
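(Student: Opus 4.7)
The plan follows the same strategy as the proof of Lemma \ref{lem:SP restriction}. Start with the defining commutative square for $\phi_{\Spin}$ from section \ref{sec:liftings}, which is a genuine square (rather than a triangle) whose bottom row is $\rho_{\SO}$, whose top row is $\phi_{\Spin}$, and whose vertical arrows are the central isogenies $\Spin \twoheadrightarrow \SO$. Restrict all four morphisms to a compatible system of split maximal tori, then dualize to obtain a commutative square of character lattices. The central isogenies $\Spin \twoheadrightarrow \SO$ are surjective on maximal tori with central kernel, so after dualization they become the inclusions $\T^*_{\SO_n} \hookrightarrow \T^*_{\Spin_n}$ and $\T^*_{\SO_{nm}} \hookrightarrow \T^*_{\Spin_{nm}}$ explicitly recorded at the beginning of section \ref{sec:Rost}.

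The resulting dualized square thus has horizontal arrows $\phi_{\Spin}^*$ on top and $\rho_{\SO}^*$ on the bottom, joined on each side by the above inclusions, as in
\[
\begin{tikzcd}
\T^*_{\Spin_n}\oplus \T^*_{\Spin_m} & \T^*_{\Spin_{nm}} \arrow{l}[swap]{\phi_{\Spin}^*} \\
\T^*_{\SO_n}\oplus \T^*_{\SO_m} \arrow[hookrightarrow]{u} & \T^*_{\SO_{nm}} \arrow[hookrightarrow]{u} \arrow{l}[swap]{\rho_{\SO}^*}
\end{tikzcd}
\]
Commutativity of this square yields exactly the claim: the restriction of $\phi_{\Spin}^*$ to the subgroup $\T^*_{\SO_{nm}}$ factors through $\T^*_{\SO_n}\oplus \T^*_{\SO_m}$ and, via that identification, agrees with $\rho_{\SO}^*$.

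I do not anticipate a serious obstacle. The only point requiring verification is that the vertical maps really do become inclusions after dualization, which is immediate from the explicit descriptions of $\T^*_{\Spin}$ and $\T^*_{\SO}$: the lattice $\Z\langle e_1,\ldots,e_n\rangle$ sits as a finite-index sublattice of the character lattice of $\Spin_n$, which additionally contains $\tfrac{1}{2}(e_1+\cdots+e_n)$. The only notable difference from the $\SP$ setting is that here both sides of $\phi_{\Spin}^*$ strictly contain the corresponding side of $\rho_{\SO}^*$, so the identification involves inclusions on both ends of the diagram rather than just one.
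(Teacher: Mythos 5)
Your proposal is correct and matches the paper's proof essentially verbatim: restrict the defining commutative square for $\phi_{\Spin}$ to maximal tori, dualize so the central isogenies become the inclusions of character lattices, and read off from commutativity that $\phi_{\Spin}^*$ restricted to $\T^*_{\SO_{nm}}$ lands in $\T^*_{\SO_n}\oplus\T^*_{\SO_m}$ and agrees with $\rho_{\SO}^*$. Your closing observation that both vertical arrows are proper inclusions here (unlike in the $\SP$ case, where the bottom-left object is a product of simply connected groups so that side is an equality) is exactly the point the paper's wording ``can be identified with'' is acknowledging.
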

\begin{proof}
Restricting the commutative diagram on the left to maximal tori and then dualizing yields the adjacent diagram on the right.
\[
\begin{tikzcd}
\Spin_n \times \Spin_m \arrow{r}{\phi_{\Spin}} \arrow[twoheadrightarrow]{d} & \Spin_{nm} \arrow[twoheadrightarrow]{d} \\
\SO_n\times \SO_m \arrow{r}{\rho_{\SO}} & \SO_{nm}
\end{tikzcd}
\begin{tikzcd}
\T^*_{\Spin_n}\oplus \T^*_{\Spin_m} & \T^*_{\Spin_{nm}} \arrow{l}[swap]{\phi_{\Spin}^*} \\
\T^*_{\SO_n}\oplus \T^*_{\SO_m} \arrow[hookrightarrow]{u} & \T^*_{\SO_{nm}} \arrow[hookrightarrow]{u} \arrow{l}[swap]{\rho_{\SO}^*}
\end{tikzcd}
\]
Since the diagram is commutative we see that $\phi_{\Spin}^*$ maps elements of the subgroup $\T^*_{\SO_{nm}}\leq \T^*_{\Spin_{nm}}$ into the subgroup $\T^*_{\SO_{n}}\oplus\T^*_{\SO_{m}} \leq \T^*_{\Spin_n}\oplus \T^*_{\Spin_m}$. Therefore we may identify $\rho_{\SO}^*$ with this restriction of $\phi_{\Spin}^*$ to $\T^*_{\SO_{nm}}$.
\end{proof}

\begin{prop}\label{prop:SP multipliers}
The Rost multipliers of $\phi_{\SP}\colon \SP_{2n}\times\SP_{2m}\to \Spin_{4nm}$ are $(m,n)$. That is
\[
\phi_{\SP}^\dagger(q_{\Spin_{4nm}})=(mq_{\SP_{2n}}, nq_{\SP_{2m}}).
\]
\end{prop}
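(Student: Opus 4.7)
The plan is to reduce the computation of $\phi_{\SP}^\dagger$ to one involving $\rho_{\SP}^\dagger$ via Lemma \ref{lem:SP restriction}, and then compute $\rho_{\SP}^\dagger(q_{\SO_{4nm}})$ directly by tracking eigenvalues of the matrix Kronecker product. The key observation is that from the tables of normalized Killing forms one has $q_{\SO_{4nm}} = 2\,q_{\Spin_{4nm}}$ as elements of $\Sym^2(\T^*_{\Spin_{4nm}})$, once $\T^*_{\SO_{4nm}}$ is viewed as a subgroup of $\T^*_{\Spin_{4nm}}$ via the projection $\Spin_{4nm}\twoheadrightarrow \SO_{4nm}$.

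First, I would apply Lemma \ref{lem:SP restriction}, which says $\phi_{\SP}^*$ restricts to $\rho_{\SP}^*$ on the subgroup $\T^*_{\SO_{4nm}}$. Extending multiplicatively to $\Sym^2$, this gives $\phi_{\SP}^\dagger(\mu)=\rho_{\SP}^\dagger(\mu)$ for every $\mu\in \Sym^2(\T^*_{\SO_{4nm}})$. Applied to $\mu = q_{\SO_{4nm}} = \sum_{j=1}^{2nm} e_j^2$, together with $2q_{\Spin_{4nm}} = q_{\SO_{4nm}}$, this yields
\[
2\,\phi_{\SP}^\dagger(q_{\Spin_{4nm}}) \;=\; \phi_{\SP}^\dagger(q_{\SO_{4nm}}) \;=\; \rho_{\SP}^\dagger(q_{\SO_{4nm}}),
\]
reducing the problem to computing the right-hand side, which lives in the torsion-free group $\Sym^2(\T^*_{\SP_{2n}}\oplus \T^*_{\SP_{2m}})$.

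Next, I would compute $\rho_{\SP}^*$ directly from the matrix picture. A split maximal torus element $(A,B)\in \SP_{2n}\times \SP_{2m}$ is diagonal with entries $a_1,\ldots,a_n,a_n^{-1},\ldots,a_1^{-1}$ and $b_1,\ldots,b_m,b_m^{-1},\ldots,b_1^{-1}$, where I write $a_i = e_i^{\SP_{2n}}$ and $b_j = e_j^{\SP_{2m}}$. Conjugation by $P$ preserves eigenvalues, so $\rho_{\SP}(A,B)$ has $4nm$ eigenvalues $a_i^{\epsilon}b_j^{\delta}$ for $\epsilon,\delta\in\{\pm 1\}$. These pair under inversion into $2nm$ pairs, namely $\{a_ib_j,(a_ib_j)^{-1}\}$ and $\{a_ib_j^{-1},(a_ib_j^{-1})^{-1}\}$ for $(i,j)\in\{1,\ldots,n\}\times\{1,\ldots,m\}$. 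Since any overall sign on a character $e_k^{\SO_{4nm}}$ is irrelevant after squaring, the pulled-back characters may be taken to be $e_i^{\SP_{2n}}+e_j^{\SP_{2m}}$ and $e_i^{\SP_{2n}}-e_j^{\SP_{2m}}$ as $(i,j)$ runs over $\{1,\ldots,n\}\times\{1,\ldots,m\}$.

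The final step is the routine calculation
\begin{align*}
\rho_{\SP}^\dagger(q_{\SO_{4nm}}) &= \sum_{i=1}^{n}\sum_{j=1}^{m}\left[(e_i^{\SP_{2n}}+e_j^{\SP_{2m}})^2 + (e_i^{\SP_{2n}}-e_j^{\SP_{2m}})^2\right] \\
&= \sum_{i,j}\left[2(e_i^{\SP_{2n}})^2 + 2(e_j^{\SP_{2m}})^2\right] = 2m\,q_{\SP_{2n}} + 2n\,q_{\SP_{2m}},
\end{align*}
and dividing by $2$ yields $\phi_{\SP}^\dagger(q_{\Spin_{4nm}}) = (m\,q_{\SP_{2n}},\,n\,q_{\SP_{2m}})$ as claimed. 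The main obstacle is really only bookkeeping: being careful about which lattice hosts which form, and in particular about the factor of $2$ between $q_{\Spin_{4nm}}$ and $q_{\SO_{4nm}}$ that makes the reduction to $\rho_{\SP}^\dagger$ legitimate. As an alternative to the eigenvalue argument, one could instead compute $\rho_{\SP}^*(e_k^{\SO_{4nm}})$ on each Chevalley generator $h_{\alpha_i}(t)$ of the torus of $\SP_{2n}\times \SP_{2m}$ using Corollary \ref{cor:max tori}, but the eigenvalue approach is shorter.
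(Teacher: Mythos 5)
Your argument is correct and arrives at the same answer, but you replace the paper's citation with a direct computation. The paper invokes \cite[Proposition~7.9(5)(b)]{GMS} to obtain $\rho_{\SP}^\dagger(q_{\SO_{4nm}})=(2m\,q_{\SP_{2n}},\,2n\,q_{\SP_{2m}})$, whereas you re-derive this from scratch by tracking eigenvalues of the matrix Kronecker product of diagonal torus elements and then squaring. Both proofs then proceed identically: apply Lemma~\ref{lem:SP restriction} to identify $\phi_{\SP}^\dagger$ with $\rho_{\SP}^\dagger$ on $\Sym^2(\T^*_{\SO_{4nm}})$, use $q_{\SO_{4nm}}=2\,q_{\Spin_{4nm}}$, and divide by $2$. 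The trade-off is clear: the paper's route is shorter and leans on GMS, while yours is self-contained and makes explicit why the multipliers are $(2m,2n)$ at the $\SO$/$\SP$ level. One small point worth stating explicitly in your version: the conjugation by $P$ sends the diagonal torus of $\SO_{4nm}(\sympinv_{2n}\otimes\sympinv_{2m})$ to the diagonal torus of $\SO_{4nm}(\orthinv_{4nm})$ because the blocks $J$ and $K$ of $P$ are signed permutation matrices; this is what lets you read off the pulled-back characters $e_i\pm e_j$ (up to sign) directly from the eigenvalues, rather than merely knowing the eigenvalue multiset.
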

\begin{proof}
As a result of \cite[Proposition 7.9(5)(b)]{GMS} the Rost multipliers of $\rho_{\SP}$ are $(2m,2n)$. That is
\[
\rho_{\SP}^\dagger(q_{\SO_{4nm}})=(2mq_{\SP_{2n}},2nq_{\SP_{2m}}).
\]
Therefore using lemma \ref{lem:SP restriction} and the description of the normalized Killing forms above, we can compute
\begin{align*}
\phi_{\SP}^\dagger(q_{\Spin_{4nm}})&=\phi_{\SP}^\dagger(\frac{1}{2}q_{\SO_{4nm}}) = \frac{1}{2}\rho_{\SP}^\dagger(q_{\SO_{4nm}}) \\
&= \frac{1}{2}(2mq_{\SP_{2n}},2nq_{\SP_{2m}}) = (mq_{\SP_{2n}},nq_{\SP_{2m}}).
\end{align*}
Hence the Rost multipliers of $\phi_{\SP}$ are $(m,n)$.
\end{proof}

\begin{prop}\label{prop:Spin multipliers}
The Rost multipliers of $\phi_{n,m}\colon \Spin_{n}\times\Spin_{m} \to \Spin_{nm}$ are $(m,n)$. That is
\[
\phi_{n,m}^\dagger(q_{\Spin_{nm}})=(mq_{\Spin_{n}}, nq_{\Spin_{m}}).
\]
\end{prop}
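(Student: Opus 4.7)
The plan is to mimic the argument for Proposition \ref{prop:SP multipliers} almost verbatim, replacing the symplectic input with the orthogonal input. Start from the analogous result for $\rho_{\SO}$: by \cite[Proposition 7.9(5)(b)]{GMS}, the Rost multipliers of the Kronecker tensor product $\rho_{\SO}\colon \SO_n\times\SO_m\to \SO_{nm}$ are $(m,n)$, i.e.
\[
\rho_{\SO}^\dagger(q_{\SO_{nm}}) = (m q_{\SO_n},\, n q_{\SO_m}).
\]
This statement is uniform in the parities of $n$ and $m$, because the table in the excerpt shows $q_{\SO_{2k}}=q_{\SO_{2k+1}}=\sum e_i^2$, so there is no normalization ambiguity when we mix types B and D.

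Next, invoke Lemma \ref{lem:Spin restriction} to identify $\rho_{\SO}^*$ with the restriction of $\phi_{n,m}^*$ to the sublattice $\T^*_{\SO_{nm}}\leq \T^*_{\Spin_{nm}}$; extending this agreement to symmetric squares and then tensoring with $\Q$, the induced map $\phi_{n,m}^\dagger$ agrees with $\rho_{\SO}^\dagger$ on the $\Q$-span of $\Sym^2(\T^*_{\SO_{nm}})$. From the table of normalized Killing forms in the excerpt we have $q_{\Spin_{nm}} = \tfrac{1}{2}q_{\SO_{nm}}$ and similarly $q_{\Spin_n} = \tfrac{1}{2}q_{\SO_n}$ and $q_{\Spin_m} = \tfrac{1}{2}q_{\SO_m}$. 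Combining these ingredients gives
\[
\phi_{n,m}^\dagger(q_{\Spin_{nm}}) = \phi_{n,m}^\dagger\bigl(\tfrac{1}{2} q_{\SO_{nm}}\bigr) = \tfrac{1}{2}\rho_{\SO}^\dagger(q_{\SO_{nm}}) = \tfrac{1}{2}(m q_{\SO_n},\, n q_{\SO_m}) = (m q_{\Spin_n},\, n q_{\Spin_m}),
\]
which reads off the Rost multipliers as $(m,n)$.

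There is essentially no obstacle: the work has already been done in setting up Lemma \ref{lem:Spin restriction} and in tabulating the normalized Killing forms. The one bookkeeping point to check is that the rational extension of $\phi_{n,m}^\dagger$ is legitimate, i.e.\ that the equality $\phi_{n,m}^\dagger = \rho_{\SO}^\dagger$ on $\Sym^2(\T^*_{\SO_{nm}})$ remains valid after inverting $2$ so that $\tfrac{1}{2} q_{\SO_{nm}}$ is a meaningful input; this is immediate because $\phi_{n,m}^\dagger$ is a $\Z$-linear map of free abelian groups and extends uniquely to $\Q$-coefficients. The only mild care needed is noticing that although $q_{\Spin_{nm}}$ itself is not in the sublattice $\Sym^2(\T^*_{\SO_{nm}})$, twice it is; that is enough for the computation above.
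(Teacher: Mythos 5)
Your proposal matches the paper's proof essentially verbatim: it cites the same \cite[Proposition 7.9(5)(b)]{GMS} result for $\rho_{\SO}$, invokes Lemma \ref{lem:Spin restriction} to identify $\rho_{\SO}^*$ with the restriction of $\phi_{n,m}^*$, and uses the normalized Killing form table to carry out the same $\tfrac{1}{2}$-rescaling computation. The only difference is that you explicitly flag the bookkeeping point about extending to $\Q$-coefficients (since $q_{\Spin_{nm}}$ lies outside $\Sym^2(\T^*_{\SO_{nm}})$ though $2q_{\Spin_{nm}}$ does not), which the paper passes over silently; this is a correct and slightly more careful presentation of the same argument.
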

\begin{proof}
As a result of \cite[Proposition 7.9(5)(b)]{GMS} the Rost multipliers of $\rho_{n,m}$ are $(m,n)$. That is
\[
\rho_{n,m}^\dagger(q_{\SO_{nm}})=(mq_{\SO_{n}},nq_{\SO_{m}}) \\
\]
Therefore by lemma \ref{lem:Spin restriction} and using the description of the normalized Killing forms above, we can compute
\begin{align*}
\phi_{n,m}^\dagger(q_{\Spin_{nm}})&=\phi_{n,m}^\dagger(\frac{1}{2}q_{\SO_{nm}}) = \frac{1}{2}\rho_{n,m}^\dagger(q_{\SO_{nm}})\\
&=\frac{1}{2}(mq_{\SO_{n}},nq_{\SO_{m}}) = (mq_{\Spin_{n}},nq_{\Spin_{m}}).
\end{align*}
Hence the Rost multipliers of $\phi_{n,m}$ are $(m,n)$ also.
\end{proof}

\begin{theo}\label{theo:Rost}
Let $\phi_{\SP}'$ and  $\phi_{2n,2m}'$ be the maps of propositions \ref{prop: PSp induced} and \ref{prop: PSO induced} respectively. Then their Rost multipliers are equal and they are described by
\begin{align*}
(\phi_{\SP}')^\dagger(q_{\HSpin_{4nm}})&=(a q_{\PSP_{2n}}, b q_{\PSP_{2m}}) \\
(\phi_{2n,2m}')^\dagger(q_{\HSpin_{4nm}})&= ( a q_{\PSO_{2n}}, b q_{\PSO_{2m}})
\end{align*}
\[
\text{where } (a,b) =\left\{ \begin{array}{cc}n\pmod{4} & \begin{array}{c|cccc}
& \multicolumn{4}{c}{m \pmod{4}} \\
& 0 & 1 & 2 & 3 \\ \hline
0 & (m,n)&(m,\frac{n}{4}) &(m,\frac{n}{2}) & (m,\frac{n}{4}) \\
1 &(\frac{m}{4},n) & &(\frac{m}{2},n) & \\
2 &(\frac{m}{2},n) &(m,\frac{n}{2}) &(\frac{m}{2},\frac{n}{2}) &(m,\frac{n}{2}) \\
3 &(\frac{m}{4},n) & &(\frac{m}{2},n) & \\
\end{array}
\end{array}\right.
\]
with gaps appearing since these maps only occur when at least one of $n$ and $m$ is even. 
\end{theo}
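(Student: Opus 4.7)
The plan is to combine the already-computed Rost multipliers of $\phi_{\SP}$ and $\phi_{n,m}$ (Propositions \ref{prop:SP multipliers} and \ref{prop:Spin multipliers}) with the relationships between the normalized Killing forms of a group and its central quotient. Starting from the commutative diagrams of Propositions \ref{prop: PSp induced} and \ref{prop: PSO induced}, I would first pass to maximal tori and dualize, which shows that $(\phi_{\SP}')^*$ is simply the restriction of $\phi_{\SP}^*$ to the subgroup $\T^*_{\HSpin_{4nm}} \leq \T^*_{\Spin_{4nm}}$, and likewise $(\phi_{2n,2m}')^*$ is the restriction of $\phi_{2n,2m}^*$. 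This is the same bookkeeping that underlies Lemmas \ref{lem:SP restriction} and \ref{lem:Spin restriction}, except that here the inclusion of character lattices comes from a central quotient on the group side rather than from a simply connected cover.

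Reading off the table of normalized Killing forms in section \ref{sec:Rost}, define $\kappa(N) = 1,4,2,4$ for $N \equiv 0,1,2,3 \pmod{4}$. Then $q_{\HSpin_{4N}} = \kappa(N)\, q_{\Spin_{4N}}$, and in parallel $q_{\PSP_{2n}} = \kappa(n)\, q_{\SP_{2n}}$ and $q_{\PSO_{2n}} = \kappa(n)\, q_{\SO_{2n}}$. Using Proposition \ref{prop:SP multipliers} I would then compute
\[
(\phi_{\SP}')^\dagger(q_{\HSpin_{4nm}}) \;=\; \phi_{\SP}^\dagger\bigl(\kappa(nm)\, q_{\Spin_{4nm}}\bigr) \;=\; \bigl(\kappa(nm)\, m\, q_{\SP_{2n}},\; \kappa(nm)\, n\, q_{\SP_{2m}}\bigr).
\]
Equating this with $(a\, q_{\PSP_{2n}}, b\, q_{\PSP_{2m}}) = \bigl(a\kappa(n)\, q_{\SP_{2n}},\, b\kappa(m)\, q_{\SP_{2m}}\bigr)$ gives $a = \kappa(nm)\,m/\kappa(n)$ and $b = \kappa(nm)\,n/\kappa(m)$. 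An identical calculation using Proposition \ref{prop:Spin multipliers} yields the very same $(a,b)$ for $\phi_{2n,2m}'$, so the asserted equality of the two sets of Rost multipliers drops out automatically from the coincidence that $q_{\PSP_{2n}}$ and $q_{\PSO_{2n}}$ are the \emph{same} multiple of $q_{\SP_{2n}} = q_{\SO_{2n}}$.

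What remains is to check that the closed-form $\bigl(a,b\bigr) = \bigl(\kappa(nm)m/\kappa(n),\,\kappa(nm)n/\kappa(m)\bigr)$ reproduces the twelve non-empty cells of the table, which I would handle by a direct case analysis on $(n \bmod 4,\, m \bmod 4)$. The only real subtlety, and the one step worth spelling out carefully, is verifying that the quotients $\kappa(nm)m/\kappa(n)$ and $\kappa(nm)n/\kappa(m)$ are in fact integers; this is exactly where the standing hypothesis that at least one of $n,m$ is even gets used, and it accounts for the four gaps in the table at $(n\bmod 4,\, m\bmod 4) \in \{(1,1),(1,3),(3,1),(3,3)\}$. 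Beyond this integrality check each cell is a one-line arithmetic verification, so I do not expect any deeper obstacle than organizing the case split cleanly.
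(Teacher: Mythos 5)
Your approach is essentially the paper's own: restrict to maximal tori, observe that $(\phi_{\SP}')^*$ and $(\phi_{2n,2m}')^*$ are restrictions of $\phi_{\SP}^*$ and $\phi_{2n,2m}^*$ respectively, then scale the known Rost multipliers of the simply connected lifts by the ratios $q_{\HSpin_{4nm}}/q_{\Spin_{4nm}}$ and $q_{\PSP_{2n}}/q_{\SP_{2n}}$, $q_{\PSO_{2n}}/q_{\SO_{2n}}$. Your $\kappa(N)$ is exactly the paper's $c$ and $d_i$, and the closed form $(a,b)=\bigl(\kappa(nm)m/\kappa(n),\,\kappa(nm)n/\kappa(m)\bigr)$ reproduces all twelve populated cells of the table; the observation that the $\SP$ and $\Spin$ computations give the same $(a,b)$ because $q_{\PSP_{2n}}$ and $q_{\PSO_{2n}}$ are the same multiple of $q_{\SP_{2n}}=q_{\SO_{2n}}$ is also correct and matches the paper's cancellation of the factors of $2$.

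One small inaccuracy: the four empty cells of the table are \emph{not} explained by a failure of integrality. When both $n$ and $m$ are odd, $nm$ is odd, so $\kappa(nm)=\kappa(n)=\kappa(m)=4$ and your formulas would give $(a,b)=(m,n)$, which are perfectly good integers. Those cells are blank simply because Propositions~\ref{prop: PSp induced} and \ref{prop: PSO induced} do not construct the maps $\phi_{\SP}'$, $\phi_{2n,2m}'$ unless at least one of $n,m$ is even, so there is no homomorphism whose Rost multipliers one could compute. In fact the integrality of $\kappa(nm)m/\kappa(n)$ and $\kappa(nm)n/\kappa(m)$ holds automatically in every congruence class, so it is a sanity check rather than the place where the parity hypothesis is consumed.
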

\begin{proof}
By restricting the commutative diagrams on the right to maximal tori and then dualizing we obtain the adjacent diagrams on the right.
\[
\begin{tikzcd}
\SP_{2n}\times \SP_{2m} \arrow{r}{\phi_{\SP}} \arrow[twoheadrightarrow]{d} & \Spin_{4nm} \arrow[twoheadrightarrow]{d} \\
\PSP_{2n}\times \PSP_{2m} \arrow{r}{\phi_{\SP}'} & \HSpin_{4nm} \\
\end{tikzcd} \quad
\begin{tikzcd}
T^*_{\SP_{2n}}\oplus T^*_{\SP_{2m}} & T^*_{\Spin_{4nm}} \arrow{l}[swap]{\phi_{\SP}^*}\\
T^*_{\PSP_{2n}}\oplus T^*_{\PSP_{2m}}\arrow[hookrightarrow]{u} & T^*_{\HSpin_{4nm}}.\arrow{l}[swap]{(\phi_{\SP}')^*} \arrow[hookrightarrow]{u} \\
\end{tikzcd}
\]
\[
\begin{tikzcd}
\Spin_{2n}\times \Spin_{2m} \arrow{r}{\phi_{\SP}} \arrow[twoheadrightarrow]{d} & \Spin_{4nm} \arrow[twoheadrightarrow]{d} \\
\PSO_{2n}\times \PSO_{2m} \arrow{r}{\phi_{2n,2m}'} & \HSpin_{4nm} \\
\end{tikzcd} \quad
\begin{tikzcd}
T^*_{\Spin_{2n}}\oplus T^*_{\Spin_{2m}} & T^*_{\Spin_{4nm}} \arrow{l}[swap]{\phi_{2n,2m}^*}\\
T^*_{\PSO_{2n}}\oplus T^*_{\PSO_{2m}}\arrow[hookrightarrow]{u} & T^*_{\HSpin_{4nm}}.\arrow{l}[swap]{(\phi_{2n,2m}')^*} \arrow[hookrightarrow]{u} \\
\end{tikzcd}
\]
From the top diagrams we see that $(\phi_{\SP}')^*$ can be viewed as a restriction of $\phi_{\SP}^*$, and therefore $(\phi_{\SP}')^\dagger$ can be viewed as a restriction of $\phi_{\SP}^\dagger$. Similarly, the bottom diagrams show that $(\phi_{2n,2m}')^\dagger$ can be viewed as a restriction of $\phi_{2n,2m}^\dagger$. Then, since we know the Rost multipliers of $\phi_{\SP}$ and $\phi_{2n,2m}$ from propositions \ref{prop:SP multipliers} and \ref{prop:Spin multipliers} respectively we can compute
\begin{align*}
(\phi_{\SP}')^\dagger(q_{\HSpin_{4nm}})&= \phi_{\SP}^\dagger(cq_{\Spin_{4nm}})=(cmq_{\SP_{2n}},cnq_{\SP_{2m}})\\
&=\big(\frac{cm}{d_1}q_{\PSP_{2n}},\frac{cn}{d_2}q_{\PSP_{2m}}\big)
\end{align*}
where $c,d_1,d_2\in\Z$ are integers depending on $n\pmod{4}$ and $m\pmod{4}$ as described at the beginning of section \ref{sec:Rost}. Sorting through the possible cases yields that the Rost multipliers are $(a,b)$ as described in the table above. Similarly we can compute
\begin{align*}
(\phi_{2n,2m}')^\dagger(q_{\HSpin_{4nm}})&= \phi_{2n,2m}^\dagger(cq_{\Spin_{4nm}})=(2cmq_{\Spin_{2n}},2cnq_{\Spin_{2m}})\\
&=\left(\frac{2cm}{2d_1}q_{\PSO_{2n}},\frac{2cn}{2d_2}q_{\PSO_{2m}}\right)
\end{align*}
for the same $c,d_1,d_2\in\Z$ as in the previous case, and therefore we obtain the same Rost multipliers $(a,b)$.
\end{proof}

\begin{theo}
Let $\phi_{2n,2m+1}'$ be the map from proposition \ref{prop:HSpin induced}. Then its Rost multipliers are described by
\[
(\phi_{2n,2m+1}')^\dagger(q_{\HSpin_{2n(2m+1)}}) = (a q_{\HSpin_{2n}}, b q_{\SO_{2m+1}})
\]
\[
\text{where } (a,b)=\begin{cases} (2m+1,n) & \frac{n}{2}\cong 0 \pmod{4} \\ (2m+1,2n) & \frac{n}{2} \cong 2 \pmod{4} \\ (2m+1,4n) & \frac{n}{2} \cong 1,3 \pmod{4} \end{cases}.
\]
\end{theo}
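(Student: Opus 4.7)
The plan is to mirror the argument used for Theorem \ref{theo:Rost}. First I would restrict the commutative diagram of Proposition \ref{prop:HSpin induced} to maximal tori and dualize, producing a square in which the induced map $(\phi_{2n,2m+1}')^*$ on character groups is the restriction of $\phi_{2n,2m+1}^*$ to $T^*_{\HSpin_{2n(2m+1)}} \hookrightarrow T^*_{\Spin_{2n(2m+1)}}$. Commutativity guarantees that this restriction lands inside $T^*_{\HSpin_{2n}} \oplus T^*_{\SO_{2m+1}}$ sitting inside $T^*_{\Spin_{2n}} \oplus T^*_{\Spin_{2m+1}}$, so that $(\phi_{2n,2m+1}')^\dagger$ is just the restriction of $\phi_{2n,2m+1}^\dagger$ to the $W$-invariant quadratic forms on the half-spin lattice.

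Next, I would invoke Proposition \ref{prop:Spin multipliers}, which gives
\[
\phi_{2n,2m+1}^\dagger(q_{\Spin_{2n(2m+1)}}) = \bigl((2m+1)\,q_{\Spin_{2n}},\; 2n\,q_{\Spin_{2m+1}}\bigr).
\]
The table of normalized Killing forms from the start of Section \ref{sec:Rost} then supplies the two conversion identities needed on the right-hand side: $q_{\Spin_{2m+1}} = \tfrac{1}{2}\,q_{\SO_{2m+1}}$, and $q_{\HSpin_{4k}} = \mu_k\, q_{\Spin_{4k}}$ with $\mu_k \in \{1,2,4\}$ depending on $k \bmod 4$ (namely $\mu_k = 1,2,4$ for $k \equiv 0,\,2,\,1\text{ or }3$ respectively).

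The observation that organizes the three displayed cases uniformly is that since $2m+1$ is odd, $\tfrac{n}{2}$ and $\tfrac{n(2m+1)}{2}$ lie in the same class modulo $4$ (after collapsing $\{1,3\}$ as in the definition of $\mu_k$). Hence the single scalar $\mu := \mu_{n/2} = \mu_{n(2m+1)/2}$ relates both the source and the target half-spin Killing forms to their spin counterparts. Writing $q_{\HSpin_{2n(2m+1)}} = \mu\, q_{\Spin_{2n(2m+1)}}$ and applying the restriction of $\phi_{2n,2m+1}^\dagger$, a short calculation gives
\[
(\phi_{2n,2m+1}')^\dagger(q_{\HSpin_{2n(2m+1)}}) = \bigl((2m+1)\mu\, q_{\Spin_{2n}},\; 2n\mu\, q_{\Spin_{2m+1}}\bigr) = \bigl((2m+1)\,q_{\HSpin_{2n}},\; n\mu\, q_{\SO_{2m+1}}\bigr),
\]
so $b = n\mu$; substituting $\mu = 1,2,4$ recovers the three cases of the theorem.

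The only substantive obstacle is the bookkeeping that verifies $\mu_{n/2} = \mu_{n(2m+1)/2}$ in every residue class; this reduces to the elementary check that multiplication by the odd integer $2m+1$ preserves the partition $\{0\},\{2\},\{1,3\}$ of $\Z/4\Z$ that defines $\mu_k$. Once this is in hand, everything else is a direct application of Proposition \ref{prop:Spin multipliers} together with the normalization factors recorded in the tables of Section \ref{sec:Rost}.
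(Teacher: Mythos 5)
Your proof is correct and follows the same route as the paper: restrict to maximal tori and dualize, view $(\phi_{2n,2m+1}')^\dagger$ as a restriction of $\phi_{2n,2m+1}^\dagger$, invoke Proposition \ref{prop:Spin multipliers}, and rescale using the normalized Killing form table. Your explicit observation that multiplication by $2m+1$ preserves the partition $\{0\},\{2\},\{1,3\}$ of $\Z/4\Z$, hence $\mu_{n/2}=\mu_{n(2m+1)/2}$, is exactly the content the paper compresses into ``where $c,d_1\in\Z$ depend on $n/2\pmod 4$'' and ``sorting through the possible cases.''
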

\begin{proof}
Restricting the commutative diagram on the top to maximal tori and dualizing yields the diagram on the bottom
\[
\begin{tikzcd}
\Spin_{2n}\times\Spin_{2m+1} \arrow[twoheadrightarrow]{d} \arrow{r}{\phi_{2n,2m+1}} & \Spin_{2n(2m+1)} \arrow[twoheadrightarrow]{d} \\
\HSpin_{2n}\times\SO_{2m+1} \arrow{r}{\phi_{2n,2m+1}'} & \HSpin_{2n(2m+1)}
\end{tikzcd}
\]
\[
\begin{tikzcd}
\T^*_{\Spin_{2n}}\oplus \T^*_{\Spin_{2m+1}} & \T^*_{\Spin_{2n(2m+1)}} \arrow{l}[swap]{\phi_{2n,2m+1}^*} \\
\T^*_{\HSpin_{2n}}\oplus \T^*_{\SO_{2m+1}} \arrow[hookrightarrow]{u} & \T^*_{\HSpin_{2n(2m+1)}} \arrow{l}[swap]{(\phi_{2n,2m+1}')^*} \arrow[hookrightarrow]{u}
\end{tikzcd}
\]
As in previous proofs this means we can consider $(\phi_{2n,2m+1}')^\dagger$ as a restriction of $\phi_{2n,2m+1}^\dagger$. Then we can compute
\begin{align*}
(\phi_{2n,2m+1}')^\dagger (q_{\HSpin_{2n(2m+1)}})&= \phi_{2n,2m+1}^\dagger(cq_{\Spin_{2n(2m+1)}})\\
&=((2m+1)cq_{\Spin_{2n}},2ncq_{\Spin_{2m+1}})\\
&=\left(\frac{(2m+1)c}{d_1}q_{\HSpin_{2n}},\frac{2nc}{2}q_{\SO_{2m+1}}\right)
\end{align*}
where $c,d_1\in\Z$ depend on $n/2\pmod{4}$. Sorting through the possible cases yields that the Rost multipliers of $\phi_{2n,2m+1}'$ are the $(a,b)$ given above.
\end{proof}

We briefly note that the results given above agree with the Rost multipliers appearing in \cite{Gar} on the case in which they overlap. For the map $\varphi \colon \SP_2\times \SP_8 \rightarrow \Spin_{16}$ it is noted in \cite[(7.2)]{Gar} that the restriction
$\varphi|_{\SP_8} \colon \SP_8 \rightarrow \Spin_{16}$
has a Rost multiplier of $1$. Comparing this to our map $\phi \colon \SP_{2n}\times \SP_{2m} \rightarrow \Spin_{4nm}$ for the case $n=1,m=4$ we see that the duals of $\phi$ and $\phi|_{\SP_8}$ are given by
\begin{align*}
\phi^* \colon T^*_{\Spin_{16}} &\rightarrow T^*_{\SP_{2(1)}} \oplus T^*_{\SP_{2(4)}} & \phi|_{\SP_8}^* \colon T^*_{\Spin_{16}} &\rightarrow T^*_{\SP_8} \\
q_{\Spin_{16}} &\mapsto \left( 4\cdot q_{\SP_2},1\cdot q_{\SP_8}\right) &  q_{\Spin_{16}} &\mapsto 1\cdot q_{\SP_8}
\end{align*}
and hence $\phi|_{\SP_8}$ also has a Rost multiplier of 1.

\section{Application to Degree 3 Cohomological Invariants}\label{sec:cohomology}
Following \cite{Mer} for a linear algebraic group $G$ over a field $\field$ we consider its group of degree 3 cohomological invariants with coefficients in $\Q/\Z(2)$, denoted $\Inv^3(G,2)$, where such an invariant is a natural transformation
\[
\alpha \colon H^1(-,G) \to H^3(-,\Q/\Z(2)).
\]
of the functors $H^1(-,G), H^3(-, \Q/\Z(2)) \colon \text{Fields}/\field \to \text{Groups}$. Such an invariant is called \emph{normalized} if it maps the trivial $G$-torsor to $0$, and the group of normalized invariants is denoted $\Inv^3(G,2)_{\text{norm}}$. Note that 
\[
\Inv^3(G,2) = H^3(\field,\Q/\Z(2)) \oplus \Inv^3(G,2)_{\text{norm}}.
\] 
Forming a subgroup of the normalized invariants are the \emph{decomposable} invariants, $\Inv^3(G,2)_{\text{dec}}$, which are those $\alpha \in \Inv^3(G,2)_{\text{norm}}$ given by a finite sum of cup products of $\phi_i \in \field^\times$ with invariants $\alpha'_i \colon H^1(-,G) \to H^2(-,\Q/\Z(1))$ of degree 2. That is, for all $\Efield$ a field extension of $\field$ and $Y\in H^1(\Efield, G)$,
\[
\alpha(Y) = \sum \phi_i \cup \alpha'_i(Y).
\]
For $\PSP_{2n}$ the decomposable invariants are described in \cite[Theorem 4.6]{Mer} and are
\[
\Inv^3(\PSP_{2n},2)_{\text{dec}} \cong \field^\times/(\field^\times)^2
\]
where the isomorphisms is as follows. For $(c)\in\field^\times/(\field^\times)^2 \cong \Inv^3(\PSP_{2n},2)_{\text{dec}}$ it behaves over a field extension $\Efield/\field$ as
\begin{align*}
(c)\colon H^1(\Efield,\PSP_{2n})&\to H^3(\Efield,\Q/\Z(2)) \\
[(A,\psi)]&\mapsto (c)\cup[A]
\end{align*}
where $(c)\cup[A]$ is a cohomological cup product considered an element of $H^3(\Efield,\Q/\Z(2))$ by the identifications
\begin{align*}
(c)\in\Efield^\times/(\Efield^\times)^2 &\cong H^1(\Efield,\mu_2)\leq H^1(\Efield,\Q/\Z) \\
[A]\in\Br(\Efield) &\cong H^2(\Efield,\Q/\Z) \\
H^1(\Efield,\Q/\Z)\cup H^2(\Efield,\Q/\Z) &\to H^3(\Efield,\Q/\Z(2)).
\end{align*}
Similarly, we have that the decomposable invariants of $\HSpin_{4n}$ are
\[
\Inv^3(\HSpin_{4n},2)_{\text{dec}} \cong \field^\times/(\field^\times)^2.
\]
For $(c)\in\field^\times/(\field^\times)^2 \cong\Inv^3(\HSpin_{4n},2)_{\text{dec}}$ it behaves over $\Efield$ as
\begin{align*}
(c)\colon H^1(\Efield,\HSpin_{4n})&\to H^3(\Efield,\Q/\Z(2))\\
Y &\mapsto (c)\cup[A(Y)]
\end{align*}
where $[A(Y)]$ is the class of the algebra represented by the image of $Y$ under the map $H^1(\Efield,\HSpin_{4n})\to H^1(\Efield,\PSO_{4n})$ induced by the natural projection. This can be seen from a remark in \cite[\S 1a]{Mer} which tells us that
\[
\Inv^2(\HSpin_{4n},1)_{\text{norm}} \cong \mu_2^* \cong \Z/2\Z
\]
since $\mu_2$ is the kernel of the universal covering $\Spin_{4n}\twoheadrightarrow \HSpin_{4n}$. Therefore the degree two invariant $Y\mapsto [A(Y)]$, which comes from a degree two invariant of $PSO_{4n}$, is the non-trivial degree two invariant of $\HSpin_{4n}$, from which the statement follows.

Additionally, we consider the quotient of the normalized invariants by the decomposable invariants, denoted
\[
\Inv^3(G,2)_{\text{ind}} := \frac{\Inv^3(G,2)_{\text{norm}}}{\Inv^3(G,2)_{\text{dec}}}
\]
and called \emph{indecomposable} invariants. We take from \cite{Mer} and \cite{BR} some descriptions of these groups of indecomposable invariants.
\begin{align*}
\Inv^3(\PSP_{2n},2)_{\text{ind}} &\cong \begin{cases} \Z/2\Z & n \equiv 0 \pmod{4} \\ 0 & \text{else} \end{cases} \\
\Inv^3(\PSO_{2n},2)_{\text{ind}} &\cong \begin{cases} \Z/2\Z & n \equiv 0 \pmod{4} \\ 0 & \text{else} \end{cases} \\
\Inv^3(\HSpin_{4n},2)_{\text{ind}} &\cong \begin{cases} 0 & n>1 \text{ odd or } n=2 \\ \Z/2\Z & n\equiv 2\pmod{4}, n\neq 2 \\ \Z/4\Z & n \equiv 0 \pmod{4} \end{cases}
\end{align*}

As noted in \cite{BR}, the above groups of invariants are functorial in $G$, that is a homomorphism of linear algebraic groups $\rho \colon G\to H$ induces a commutative diagram
\[
\begin{tikzcd}
\Inv^3(H,2)_{\text{dec}} \arrow{r} \arrow{d} & \Inv^3(H,2)_{\text{norm}} \arrow{r} \arrow{d} & \Inv^3(H,2)_{\text{ind}} \arrow{d} \\
\Inv^3(G,2)_{\text{dec}} \arrow{r} & \Inv^3(G,2)_{\text{norm}} \arrow{r} & \Inv^3(G,2)_{\text{ind}} \\
\end{tikzcd}
\]
where the vertical maps send an invariant $\Delta$ to the composition
\[
H^1(-,G)\to H^1(-,H)\overset{\Delta}{\to} H^3(-,\Q/\Z).
\]
Now we can use our map $\phi_{\SP}'$ of proposition \ref{prop: PSp induced} within this framework to describe the structure of $\Inv^3(\HSpin_{4n},2)_{\text{norm}}$.

\begin{lem}\label{lem:decomposable pullback}
Let $\phi_{\SP}'$ be the map of proposition \ref{prop: PSp induced}. Then its induced map on decomposable invariants is the diagonal map
\begin{align*}
\field^\times/(\field^\times)^2 &\to \field^\times/(\field^\times)^2\oplus \field^\times/(\field^\times)^2 \\
(c)&\mapsto \big((c),(c)\big)
\end{align*}
which in particular is injective.
\end{lem}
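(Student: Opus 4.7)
The plan is to trace through the definitions of the decomposable invariants established just above the lemma. By the description preceding the statement, a decomposable invariant $(c) \in \field^\times/(\field^\times)^2 \cong \Inv^3(\HSpin_{4nm},2)_{\text{dec}}$ acts on $Y \in H^1(\Efield,\HSpin_{4nm})$ by $Y \mapsto (c) \cup [A(Y)]$, where $A(Y)$ is the Brauer class of the image of $Y$ under the map $H^1(\Efield,\HSpin_{4nm}) \to H^1(\Efield,\PSO_{4nm})$ coming from the natural projection. Pulling back along $\phi_{\SP}'$ therefore reduces to identifying the image in $H^1(\Efield,\PSO_{4nm})$ of a pair of torsors under the composition
\[
H^1(\Efield,\PSP_{2n}) \times H^1(\Efield,\PSP_{2m}) \xrightarrow{(\phi_{\SP}')_*} H^1(\Efield,\HSpin_{4nm}) \to H^1(\Efield,\PSO_{4nm}).
\]

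First I would observe that, by the commutative square of proposition \ref{prop: PSp induced}, this composition equals the map on $H^1$ induced by $\rho_{\SP} \colon \SP_{2n} \times \SP_{2m} \to \SO_{4nm}$ after passing to central quotients on both sides. Since $\rho_{\SP}$ is (up to conjugation by $P$) the Kronecker tensor product of matrices, the induced map on Galois cohomology sends a pair $\big([(A,\psi)],[(B,\psi')]\big)$ of central simple algebras equipped with symplectic involutions to the class of $(A \otimes B,\psi \otimes \psi')$ in $H^1(\Efield,\PSO_{4nm})$. In the Brauer group this corresponds to $[A] + [B]$.

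Combining these two steps, the pullback of $(c)$ along $\phi_{\SP}'$ sends $\big([(A,\psi)],[(B,\psi')]\big)$ to
\[
(c) \cup \big([A]+[B]\big) = (c)\cup[A] + (c)\cup[B],
\]
which, after identifying $\Inv^3(\PSP_{2n}\times\PSP_{2m},2)_{\text{dec}}$ with $\field^\times/(\field^\times)^2 \oplus \field^\times/(\field^\times)^2$ by restriction to each factor, is exactly the diagonal element $\big((c),(c)\big)$. Injectivity is then automatic, since projection onto either coordinate recovers $(c)$; equivalently, specializing to $B = \Mat_{2m}(\Efield)$ with its hyperbolic symplectic form reduces the first coordinate to the identity on $\field^\times/(\field^\times)^2$.

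The only subtle point is the identification of the decomposable invariants of the product as a direct sum of those of the two factors, since that description was not spelled out in the excerpt. This follows from the fact that $H^1$ of a product of groups is a product, so degree two normalized invariants of the product split as a direct sum, and cup product with $\field^\times/(\field^\times)^2$ then splits the decomposable invariants accordingly. The remainder of the argument is a routine verification that tensoring along the Kronecker map corresponds to tensor product of algebras-with-involution on the level of $H^1$.
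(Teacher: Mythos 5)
Your argument is correct and follows essentially the same route as the paper: identify the composition $H^1(\PSP_{2n}\times\PSP_{2m})\to H^1(\HSpin_{4nm})\to H^1(\PSO_{4nm})$ with the Kronecker tensor product map on adjoint groups, so that a pair of symplectic algebras maps to their tensor product, and then use $(c)\cup[A_1\otimes A_2]=(c)\cup[A_1]+(c)\cup[A_2]$. The extra care you take in justifying the direct-sum decomposition of $\Inv^3(\PSP_{2n}\times\PSP_{2m},2)_{\text{dec}}$ and in exhibiting injectivity via specialization is a harmless elaboration of points the paper treats as standard.
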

\begin{proof}
Let $\Efield/\field$ be a field extension. We consider the following commutative diagram of cohomology sets where the horizontal map is induced by $\phi_{\SP}'$, the vertical map is induced by the natural projection, and the diagonal map is induced by the Kronecker tensor product map between adjoint groups.
\[
\begin{tikzcd}
H^1(\Efield,\PSP_{2n}\times\PSP_{2m}) \arrow{r}\arrow{dr} & H^1(\Efield,\HSpin_{4nm}) \arrow{d} \\
 & H^1(\Efield,\PSO_{4nm})
\end{tikzcd}
\]
Since the diagonal map comes from the tensor product map, if we consider central simple algebras $(A_1,\psi_1),(A_2,\psi_2)$ over $\Efield$ of degree $2n$ and $2m$ respectively with symplectic involution then we can denote their images within the above diagram as
\[
\begin{tikzcd}
([A_1,\psi_1],[A_2,\psi_2])\arrow[mapsto]{r}\arrow[mapsto]{dr} & Y \arrow[mapsto]{d} \\
 & \left[A_1\otimes A_2,\psi_1 \otimes \psi_2\right].\\
\end{tikzcd}
\]
Now for a decomposable degree three invariant of $\HSpin$, $\Delta \in \Inv^3(\HSpin_{4nm},2)_{\text{dec}}$, $\Delta$ corresponds to some $(c)\in \field^\times/(\field^\times)^2$ and we have that
\[
\Delta(Y) = (c)\cup[A(Y)] = (c)\cup[A_1\otimes A_2].
\]
Therefore, denoting the image of $\Delta$ under the map 
\[
\Inv^3(\HSpin_{4nm},2)_{\text{dec}} \to \Inv^3(\PSP_{2n}\times\PSP_{2m},2)_{\text{dec}}
\]
by $\Delta'$ we have that
\[
\Delta'([A_1,\psi_1],[A_2,\psi_2])=\Delta(Y)=(c)\cup[A_1\otimes A_2] = (c)\cup[A_1] + (c)\cup[A_2]
\]
since addition in $\Br(\Efield)$ is the tensor product of algebras. Hence we see that $\Delta'$ corresponds to the diagonal element $\big((c),(c)\big) \in \field^\times/(\field^\times)^2\oplus \field^\times/(\field^\times)^2$.
\end{proof}

\begin{theo}\label{theo:invariant splitting}
Let $n\geq 2$. The exact sequence 
\[
0 \to \Inv^3(\HSpin_{4n},2)_{\emph{dec}} \to \Inv^3(\HSpin_{4n},2)_{\emph{norm}} \to \Inv^3(\HSpin_{4n},2)_{\emph{ind}} \to 0
\]
is split. In particular, we can describe the degree three normalized invariants of $\HSpin_{4n}$ as
\[
\Inv^3(\HSpin_{4n},2)_{\emph{norm}} \cong \begin{cases}
\field^\times/(\field^\times)^2 & n \text{ is odd or } n=2 \\
\field^\times/(\field^\times)^2 \oplus \Z/2\Z & n\equiv 2\pmod{4} \text{ and } n\neq 2 \\
\field^\times/(\field^\times)^2 \oplus \Z/4\Z & n\equiv 0\pmod{4}.
\end{cases}
\]
\end{theo}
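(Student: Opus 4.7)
The plan is to split the short exact sequence by constructing an explicit retraction $R$ of the injection $\Inv^3(\HSpin_{4n},2)_{\text{dec}} \hookrightarrow \Inv^3(\HSpin_{4n},2)_{\text{norm}}$. Once split, combining the recalled isomorphism $\Inv^3(\HSpin_{4n},2)_{\text{dec}} \cong \field^\times/(\field^\times)^2$ with the case-by-case description of $\Inv^3(\HSpin_{4n},2)_{\text{ind}}$ immediately yields the claimed structure. When $n$ is odd or $n = 2$ the indecomposable part vanishes and there is nothing further to check, so I concentrate on $n$ even with $n \geq 4$.

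For such $n$, I would apply Proposition \ref{prop: PSp induced} with $a = 2$ and $b = n/2$ (valid since $a = 2$ is even) to obtain the injection $\phi_{\SP}' \colon \PSP_4 \times \PSP_n \hookrightarrow \HSpin_{4n}$. Let $\iota \colon \PSP_4 \hookrightarrow \PSP_4 \times \PSP_n$, $g \mapsto (g,e)$, be the inclusion into the first factor. I then define
\[
R = \iota^* \circ (\phi_{\SP}')^* \colon \Inv^3(\HSpin_{4n},2)_{\text{norm}} \longrightarrow \Inv^3(\PSP_4,2)_{\text{norm}}.
\]
Because $2 \not\equiv 0 \pmod 4$, the recalled description gives $\Inv^3(\PSP_4,2)_{\text{ind}} = 0$, so the target collapses to $\Inv^3(\PSP_4,2)_{\text{dec}} \cong \field^\times/(\field^\times)^2$, which I identify canonically with $\Inv^3(\HSpin_{4n},2)_{\text{dec}}$.

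It remains to show that $R$ restricts to the identity on $\Inv^3(\HSpin_{4n},2)_{\text{dec}}$. By Lemma \ref{lem:decomposable pullback}, $(\phi_{\SP}')^*$ sends $(c)$ to the diagonal element $((c),(c))$ in $\Inv^3(\PSP_4 \times \PSP_n, 2)_{\text{dec}}$. A decomposable invariant corresponding to $((c_1),(c_2))$ evaluates on a product torsor $([A_1,\psi_1],[A_2,\psi_2])$ as $(c_1)\cup[A_1] + (c_2)\cup[A_2]$, and pulling back along $\iota$ plants the trivial torsor in the second slot, killing the second summand since $[1] = 0$ in the Brauer group. Hence $\iota^*((c_1),(c_2)) = (c_1)$, so $R(c) = (c)$ and $R$ is the desired retraction.

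The principal obstacle is establishing the structure $\Inv^3(\PSP_4 \times \PSP_n, 2)_{\text{dec}} \cong \field^\times/(\field^\times)^2 \oplus \field^\times/(\field^\times)^2$ together with the explicit evaluation rule used above. This reduces to computing $\Inv^2(\PSP_4 \times \PSP_n, 1)_{\text{norm}} \cong (\mu_2 \times \mu_2)^* \cong (\Z/2\Z)^2$, coming from the kernel of the simply connected cover $\SP_4 \times \SP_n \twoheadrightarrow \PSP_4 \times \PSP_n$ by the same principle recalled earlier for $\HSpin_{4n}$; the two generators are the invariants $([A_1,\psi_1],[A_2,\psi_2]) \mapsto [A_i]$ for $i = 1,2$, and cupping with $\field^\times/(\field^\times)^2$ produces the full group of decomposable invariants in the stated form.
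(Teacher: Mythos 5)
Your proposal is correct and takes a genuinely different route from the paper. The paper argues by exponent-counting: it shows, case by case, that $2\Delta = 0$ (resp.\ $4\Delta = 0$) for every $\Delta \in \Inv^3(\HSpin_{4n},2)_{\text{norm}}$ by pushing $2\Delta$ (resp.\ $4\Delta$) through the functoriality square and using injectivity of Lemma \ref{lem:decomposable pullback} to conclude it vanishes; once the exponent of the middle group matches that of the cyclic quotient, any preimage of a generator defines a section. Your argument instead builds an explicit retraction $R = \iota^* \circ (\phi_{\SP}')^*$ by further restricting along the inclusion $\PSP_4 \hookrightarrow \PSP_4 \times \PSP_n$ into the first factor, using that $\Inv^3(\PSP_4,2)_{\text{ind}} = 0$ so the target collapses onto decomposables, and that the composite ``diagonal then project to first coordinate'' is the identity. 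The trade-offs: your retraction argument is uniform across the residue classes of $n \bmod 4$ (no case split beyond invoking the known value of $\Inv^3(\HSpin_{4n},2)_{\text{ind}}$) and it exhibits the splitting concretely; it also needs slightly less input, since it never uses the exponent or the indecomposable part of $\Inv^3(\PSP_4 \times \PSP_n,2)$, only the decomposable part. The paper's version avoids introducing $\iota$ and the associated bookkeeping of evaluating decomposables on product torsors, at the cost of a two-case exponent calculation. Both proofs rest on the same auxiliary fact, namely $\Inv^3(\PSP_4 \times \PSP_n,2)_{\text{dec}} \cong \field^\times/(\field^\times)^2 \oplus \field^\times/(\field^\times)^2$ with the evaluation rule $((c_1),(c_2)) \mapsto (c_1)\cup[A_1] + (c_2)\cup[A_2]$, which the paper uses implicitly in the proof of Lemma \ref{lem:decomposable pullback} and you correctly flag as the point requiring justification via $\Inv^2(\PSP_4\times\PSP_n,1)_{\text{norm}} \cong (\mu_2\times\mu_2)^*$.
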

\begin{proof}
First, we note that if $n>1$ is odd or $n=2$ then $\Inv^3(\HSpin_{4n},2)_{\text{ind}}\cong 0$ and the result is immediate. 

Next assume that $n\equiv 2\pmod{4}$ and $n\neq 2$. In this case we can write $n=2m$ where $m\geq 3$ is an odd integer. Then we consider the commutative diagram of degree three invariants induced by the map $\phi_{\SP}'\colon \PSP_{4}\times \PSP_{2m} \to \HSpin_{4n}$. In this case it takes the form
\[
\begin{tikzcd}
\field^\times/(\field^\times)^2 \arrow[hookrightarrow]{r} \arrow[hookrightarrow]{d} & \Inv^3(\HSpin_{4n},2)_{\text{norm}} \arrow[twoheadrightarrow]{r} \arrow{d} & \Z/2\Z \arrow{d} \\
\field^\times/(\field^\times)^2 \oplus \field^\times/(\field^\times)^2 \arrow[hookrightarrow]{r} & \field^\times/(\field^\times)^2 \oplus \field^\times/(\field^\times)^2 \arrow{r} & 0
\end{tikzcd}
\]
where the leftmost vertical map is the injection of lemma \ref{lem:decomposable pullback}. From this diagram we see that for all $\Delta \in \Inv^3(\HSpin_{4n},2)_{\text{norm}}$, $2\Delta$ maps to $0$ in both $\Z/2\Z$ and $\field^\times/(\field^\times)^2 \oplus \field^\times/(\field^\times)^2$. Therefore, $2\Delta$ comes from $\field^\times/(\field^\times)^2$ and maps to $0$ along an injection, meaning $2\Delta=0$. Thus, the exponent of $\Inv^3(\HSpin_{4n},2)_{\text{norm}}$ is 2, the same as of $\Z/2\Z$, and so any choice of preimage of $1\in \Z/2\Z$ produces a splitting as desired.

Finally, assume that $n\equiv 0\pmod{4}$. Again we write $n=2m$ where now $m$ is an even integer, and we consider the commutative diagram of degree three invariants induced by the map $\phi_{\SP}'\colon \PSP_{4}\times\PSP_{2m} \to \HSpin_{4n}$. In this case, the group $\Inv^3(\PSP_4\times\PSP_{2m},2)_{\text{ind}}$ is either $0$ or $\Z/2\Z$ depending on $m\pmod{4}$. Therefore the commutative diagram takes the form
\[
\begin{tikzcd}
\field^\times/(\field^\times)^2 \arrow[hookrightarrow]{r} \arrow[hookrightarrow]{d} & \Inv^3(\HSpin_{4n},2)_{\text{norm}} \arrow[twoheadrightarrow]{r} \arrow{d} & \Z/4\Z \arrow{d} \\
\field^\times/(\field^\times)^2 \oplus \field^\times/(\field^\times)^2 \arrow[hookrightarrow]{r} & \Inv^3(\PSP_4\times \PSP_{2m},2)_{\text{norm}} \arrow{r} & 0 \text{ or } \Z/2\Z
\end{tikzcd}
\]
Since the bottom row is exact, the exponent of $\Inv^3(\PSP_4\times \PSP_{2m},2)_{\text{norm}}$ divides $2$ if $\Inv^3(\PSP_4\times \PSP_{2m},2)_{\text{ind}}\cong 0$, or it divides $4$ if $\Inv^3(\PSP_4\times \PSP_{2m},2)_{\text{ind}}\cong \Z/2\Z$. In either case the exponent divides $4$. Hence, for all $\Delta \in \Inv^3(\HSpin_{4n},2)_{\text{norm}}$, $4\Delta$ maps to 0 in both $\Z/4\Z$ and $\Inv^3(\PSP_4\times \PSP_{2m},2)_{\text{norm}}$. Therefore, $4\Delta$ comes from $\field^\times/(\field^\times)^2$ and maps to 0 along an injection, meaning $4\Delta=0$. Thus, the exponent of $\Inv^3(\HSpin_{4n},2)_{\text{norm}}$ is $4$, the same as $\Z/4\Z$, and so any choice of preimage of $1\in \Z/4\Z$ produces a splitting as desired. 
\end{proof}
We note that theorem \ref{theo:invariant splitting} is a generalization of \cite[Corollary 5.2]{BR} which states the result for $\HSpin_{16}$.

\subsection{An Explicit Invariant of $\HSpin$}\label{sec:invariant}
In \cite{Mer}, Merkurjev constructed a non-trivial, non-decomposable invariant $I\in \Inv^3(\PSO_{2n},2)_{\text{norm}}$ when $n\equiv 0\pmod{4}$ and the characteristic of $\field$ is different from 2. $I$ is therefore a representative of $1\in \Z/2\Z \cong \Inv^3(\PSO_{2n},2)_{\text{ind}}$. We briefly summarize the construction. Consider an element $(A,\tau,e)\in H^1(\field,\PSO_{2n})$ where $(A,\tau)$ is a central simple $\field$-algebra with orthogonal involution of trivial discriminant, and $e\in Z(C(A,\tau))$ is a non-trivial idempotent from the center of the Clifford algebra. Merkurjev finds a hyperbolic involution $\sigma'$ on $A$ and another corresponding idempotent $e'$ from the center of $C(A,\sigma')$. Merkurjev then notes that $(A,\sigma',e')$ lies in the image of the natural map
\[
H^1(\field,\SO(A,\sigma)) \to H^1(\field,\PSO(A,\sigma)).
\]
The set $H^1(\field,\SO(A,\sigma))$ is described as equivalence classes of pairs $(a,x)\in A\times \field$ where $a=\sigma(a)$ and $x^2=\text{Nrd}(a)$. Since $(A,\sigma',e')$ is in the image of this map, it comes from some $(a,x)$. The invariant then acts by
\[
I(A,\sigma,e)=(x)\cup[A]\in H^3(\field,\Q/\Z(2)).
\]
Now, let $n\equiv 0\pmod{4}$ and $n\neq 4$. We consider the above invariant pulled back to $\HSpin_{2n}$,
\begin{align*}
\Inv^3(\PSO_{2n},2)_{\text{norm}} &\to \Inv^3(\HSpin_{2n},2)_{\text{norm}} \\
I &\mapsto I'
\end{align*}
and we see that for $Y\in H^1(\field,\HSpin_{2n})$ it acts via
\[
I'(Y)=(x)\cup [A(Y)]
\]
where $[A(Y)]$ is the algebra represented by the image of $Y$ in $H^1(\field,\PSO_{2n})$, and $x\in \field$ is chosen through the process outlined above. As a result of \cite[Remark 3.10]{Mer}, since the map $\pi\colon\HSpin_{2n}\twoheadrightarrow \PSO_{2n}$ is between split groups, the induced map
\[
\Inv^3(\PSO_{2n},2)_{\text{ind}} \to \Inv^3(\HSpin_{2n},2)_{\text{ind}}
\]
is described by the Rost multiplier of the homomorphism. In this case the dual map $\pi^*\colon \T^*_{\PSO_{2n}}\hookrightarrow \T^*_{\HSpin_{2n}}$ is simply the inclusion as a subgroup, and so by referencing the normalized Killing forms given in section \ref{sec:Rost} we see the Rost multiplier is $1$ if $n/2 \equiv 2\pmod{4}$, and is $2$ if $n/2 \equiv 0\pmod{4}$. Therefore the induced map between indecomposable invariants is
\begin{align*}
\Z/2\Z &\overset{\sim}{\to} \Z/2\Z & \text{if }n/2 \equiv 2\pmod{4} \\
\Z/2\Z &\hookrightarrow \Z/4\Z &\text{if } n/2 \equiv 0\pmod{4} \\
1 &\mapsto 2 &
\end{align*}
\begin{rem}\label{explicit invariant}
In the case when $m\equiv 2\pmod{4}$, the invariant $I'$ above, along with the decomposable invariants, gives a explicit description of all degree three normalized invariants $\Inv^3(\HSpin_{4m},2)_{\text{norm}}$.
\end{rem}

\section{Appendix}
\renewcommand\thetheo{\arabic{theo}}
\setcounter{theo}{0}
The simply connected groups $\Spin_n$ and $\SP_{2n}$ are defined by the relations given in \cite[Theorem 8]{Stein}\label{less relations}. The constants $c_{ij}$ appear below. The Galois action given for each group is the entry-wise action on matrix entries in the case of $\SP_{2n}$ and $\SO_d$. The action is defined analogously for $\Spin_d$ making the natural projection $\Spin_d\twoheadrightarrow \SO_d$ a $\Galgroup$-morphism.
\begin{app}\label{SP app} $\SP_{2n}$ is of type $C_n$ and by \cite{Bou4-6} it has root system 
\[
\Phi = \{\pm e_i \pm e_j,\pm2e_k \mid 1\leq i < j \leq n, 1\leq k \leq n\}.
\]
The $\fieldsep$-points of $\SP_{2n}$ have Chevalley generators
\begin{align*}
x_{e_i-e_j}(t) &= I_{2n}+ t(E_{i,j}-E_{2n+1-j,2n+1-i})  \\
x_{e_i+e_j}(t) &=I_{2n}+t(E_{i,2n+1-j}+E_{j,2n+1-i}) \\
x_{2e_j}(t) &= I_{2n}+tE_{j,2n+1-j}
\end{align*}
for $t\in \fieldsep$, $1\leq i \leq n-1, 1\leq j \leq n$ with $i<j$, and $x_{-\alpha}(t)=x_\alpha(t)^T$. For all $\alpha \in \Phi$ and for all $\sigma \in \Galgroup=\Gal(\fieldsep/\field)$, we have 
\[
\sigma(x_\alpha(t))=x_\alpha(\sigma(t)).
\]
For $\alpha,\beta \in \Phi$, if $0\neq \alpha+\beta \notin \Phi$ then $(x_\alpha(t),x_\beta(u))$ is trivial. Otherwise, if $\alpha+\beta \in \Phi$ then it falls into one of the following cases. For integers $i,j,k,l \in [1,n]$ with $i<j,k,l$ and $k<l$, and for $a_1,a_2,a_3,a_4 \in \{ 1, -1\}$ the commutators are
\begin{align*}
j=k:& (x_{a_1e_i + a_2e_j}(t), x_{-a_2e_j + a_3e_l}(u)) = x_{a_1e_i + a_3e_l}(c tu) \\
i\neq k, j=l:& (x_{a_1e_i + a_2e_j}(t), x_{a_3e_k - a_2e_j}(u))= x_{a_1e_i + a_3e_k}(ctu) \\
i=k, j\neq l:& (x_{a_2e_i + a_1e_j}(t), x_{-a_2e_i + a_3e_l}(u)) = \begin{cases}x_{a_1e_j + a_3e_l}(ctu) & j<l \\ x_{a_3e_l + a_1e_j}(ctu) & l< j. \end{cases}
\end{align*}
where $c=a_2 \cdot \min\{a_1a_2, -a_2a_3\}$. Furthermore,
\begin{align*}
(x_{a_1e_i + a_2e_j}(t), x_{a_1e_i - a_2e_j}(u)) &= x_{2a_1e_i}(-2a_2 tu) \\
(x_{a_1e_i + a_2e_j}(t), x_{-a_1e_i + a_2e_j}(u)) &= x_{2a_2e_j}(-2a_1 tu) \\
(x_{a_1e_i + a_2e_j}(t), x_{-2a_1e_i}(u)) &= x_{-a_1e_i+a_2e_j}(a_2 tu) \cdot x_{2a_2e_j}(-a_1a_2 t^2 u) \\
(x_{a_1e_i + a_2e_j}(t), x_{-2a_2e_j}(u)) &= x_{a_1e_i - a_2e_j}(a_1 tu) \cdot x_{2a_1e_i}(-a_1a_2 t^2 u) \\
\end{align*}
\end{app}

\begin{app}\label{Spin app} $\SO_d$ is of type $D_n$ if $d=2n$, or it is of type $B_n$ if $d=2n+1$.Therefore by \cite{Bou4-6} it has root system $\Phi = \{\pm e_i \pm e_j \mid 1\leq i < j \leq n\}$ or $\Phi = \{\pm e_i \pm e_j,\pm e_k \mid 1\leq i < j \leq n, 1\leq k \leq n\}$ respectively. Letting $\overline{i}=d+1-i$, the $\fieldsep$-points of $\SO_d$ have Chevalley generators
\begin{align*}
x_{e_i-e_j}(t) &=I_d + t(E_{i,j}-E_{\overline{j},\overline{i}}) &  x_{e_i+e_j}(t) &=I_d+ t(E_{i,\overline{j}}-E_{j,\overline{i}})
\end{align*}
in all cases, and when $d=2n+1$ they have additional generators
\begin{align*}
x_{e_j}(t) &= I_d + \sqrt{2}t E_{j,n+1} - \sqrt{2}t E_{n+1,\overline{j}} - t^2 E_{j,\overline{j}}
\end{align*}
for $t\in \Efield$, $1\leq i \leq n-1, 1\leq j \leq n$ with $i<j$, and $x_{-\alpha}(t)=x_\alpha(t)^T$. For $\sigma\in \Galgroup=\Gal(\fieldsep/\field)$ and for $\alpha\in\Phi$ a long root, i.e. $\alpha=\pm e_i \pm e_j$, we have $\sigma(x_\alpha(t))=x_\alpha(\sigma(t))$. If $\alpha\in\Phi$ is a short root, i.e. $\alpha=\pm e_i$, then
\[
\sigma(x_\alpha(t))=\begin{cases}
x_\alpha(\sigma(t)) & \text{ if } \sigma(\sqrt{2})=\sqrt{2} \\
x_\alpha(-\sigma(t)) & \text{ if } \sigma(\sqrt{2})=-\sqrt{2} \\
\end{cases}
\] 
The commutator relations for $\SO_d$ and $\Spin_d$ are the same. For $\alpha,\beta \in \Phi$, if $0\neq \alpha+\beta \notin \Phi$ then $(x_\alpha(t),x_\beta(u))$ is trivial. Otherwise, if $\alpha+\beta \in \Phi$ then it falls into one of the following cases.  For integers $i,j,k,l \in [1,n]$ with $i<j,k,l$ and $k<l$, and for $a_1,a_2,a_3,a_4 \in \{ 1, -1\}$ the commutators are
\begin{align*}
j=k:& (x_{a_1e_i + a_2e_j}(t), x_{-a_2e_j + a_3e_l}(u)) = x_{a_1e_i + a_3e_l}(-a_2 tu) \\
i\neq k, j=l:& (x_{a_1e_i + a_2e_j}(t), x_{a_3e_k - a_2e_j}(u))= x_{a_1e_i + a_3e_k}(-a_3 tu) \\
i=k, j\neq l:& (x_{a_1e_i + a_2e_j}(t), x_{-a_1e_i + a_3e_l}(u)) = \begin{cases}x_{a_2e_j + a_3e_l}(a_2 tu) & j<l \\ x_{a_3e_l + a_2e_j}(-a_3 tu) & l< j. \end{cases}
\end{align*}
and furthermore if $d$ is odd there are additional relations
\begin{align*}
(x_{a_1e_i + a_2e_j}(t),x_{-a_1e_i}(u))&= x_{-a_1e_i + a_2e_j}(-tu^2) x_{a_2e_j}(a_2 tu) \\
(x_{a_1e_i + a_2e_j}(t),x_{-a_2e_j}(u))&= x_{a_1e_i - a_2e_j}(tu^2) x_{a_1e_i}(-a_2 tu)\\
(x_{a_1e_i}(t),x_{a_2e_j}(u)) &= x_{a_1e_i + a_2e_j}(-2a_2 tu).
\end{align*}
\end{app}
For all groups, commutators where $k$ is the minimal index involved instead of $i$ are described by taking the inverse of the appropriate relation above. The constants for $\SP_{2n}$ were calculated using the natural representation $\spLie_{2n} \hookrightarrow \Mat_{2n}(\field)$. The constants for $\Spin_d$ were calculated within $\SO_d$ using the natural representation $\so_d\hookrightarrow \Mat_d(\field)$. By \cite[Lemma 15.1]{Stein} these constants do not depend on the chosen representation of $\so_d$ and so also apply to $\Spin_d$. 

\begin{app}\label{Images}
The images of the map $\rho_{\SP}$ on $\fieldsep$-points in terms of Chevalley generators.
\[
\begin{array}{l|l}
x\in \SP_{2n}\times \SP_{2m} & \rho_{\SP}(x) \in \SO_{4nm} \quad \text{where } [y]=(2m)y\\ \hline
(x_{e_i - e_j}(t),I) & \prod_{k=1}^{2m} x_{e_{[i-1]+k}-e_{[j-1]+k}}(t) \\
(x_{-e_i + e_j}(t),I) & \prod_{k=1}^{2m} x_{-e_{[i-1]+k}+e_{[j-1]+k}}(t) \\
(x_{e_i + e_j}(t),I) & \prod_{k=1}^{m}x_{e_{[i-1]+k}+e_{[j-1]+m+k}}(t) x_{e_{[i-1]+m+k}+e_{[j-1]+k}}(-t) \\
(x_{-e_i - e_j}(t),I) & \prod_{k=1}^{m}x_{-e_{[i-1]+k}-e_{[j-1]+m+k}}(t) x_{-e_{[i-1]+m+k}-e_{[j-1]+k}}(-t) \\
(x_{2e_i}(t),I) & \prod_{k=1}^{m} x_{e_{[i-1]+k}+e_{[i-1]+m+k}}(t) \\
(x_{-2e_i}(t),I) & \prod_{k=1}^{m} x_{-e_{[i-1]+k}-e_{[i-1]+m+k}}(t) \\ \hline
(I,x_{e_i - e_j}(u)) & \prod_{k=0}^{n-1} x_{e_{([k]+i}-e_{[k]+j}}(u) x_{-e_{[k]+m+i}+e_{[k]+m+j}}(-u)  \\
(I,x_{-e_i + e_j}(u)) &\prod_{k=0}^{n-1} x_{-e_{[k]+i}+e_{[k]+j}}(u) x_{e_{[k]+m+i}-e_{[k]+m+j}}(-u)\\
(I,x_{e_i + e_j}(u)) &  \prod_{k=0}^{n-1}x_{e_{[k]+i}-e_{[k]+m+j}}(-u) x_{e_{[k]+j}-e_{[k]+m+i}}(-u) \\
(I,x_{-e_i - e_j}(u)) &  \prod_{k=0}^{n-1}x_{-e_{[k]+i}+e_{[k]+m+j}}(-u) x_{-e_{[k]+j}+e_{[k]+m+i}}(-u)\\
(I,x_{2e_i}(u)) & \prod_{k=0}^{n-1} x_{e_{[k]+i}-e_{[k]+m+i}}(-u) \\
(I,x_{-2e_i}(u)) & \prod_{k=0}^{n-1} x_{-e_{[k]+i}+e_{[k]+m+i}}(-u) \\
\end{array}
\]
Images of the maps $\rho_{\SO}$ on $\fieldsep$-points in terms of Chevalley generators.
\[
\begin{array}{l|l}
\SO_{2n}\times \SO_{2m} & \rho_{\SO}(x)\in\SO_{4nm}.\; [y]=(2m)y,\; \overline{y}=2m+1-y\\ \hline
(x_{e_i - e_j}(t),I) & \prod_{k=1}^{2m} x_{e_{[i-1]+k}-e_{[j-1]+k}}(t) \\
(x_{-e_i + e_j}(t),I) & \prod_{k=1}^{2m} x_{-e_{[i-1]+k}+e_{[j-1]+k}}(t)\\
(x_{e_i + e_j}(t),I) & \prod_{k=1}^{2m} x_{e_{[i-1]+k}+e_{[j-1]+\overline{k}}}(t)\\
(x_{-e_i - e_j}(t),I) & \prod_{k=1}^{2m} x_{-e_{[i-1]+k}-e_{[j-1]+\overline{k}}}(t)\\ \hline
(I,x_{e_i - e_j}(u)) & \prod_{k=0}^{n-1} x_{e_{[k]+i}-e_{[k]+j}}(u)x_{e_{[k]+\overline{j}}-e_{[k]+\overline{i}}}(-u)\\
(I,x_{-e_i + e_j}(u)) &\prod_{k=0}^{n-1} x_{-e_{[k]+i}+e_{[k]+j}}(u)x_{-e_{[k]+\overline{j}}+e_{[k]+\overline{i}}}(-u)\\
(I,x_{e_i + e_j}(u)) &  \prod_{k=0}^{n-1}x_{e_{[k]+i}-e_{[k]+\overline{j}}}(u)x_{e_{[k]+j}-e_{[k]+\overline{i}}}(-u) \\
(I,x_{-e_i - e_j}(u)) &  \prod_{k=0}^{n-1} x_{-e_{[k]+i}+e_{[k]+\overline{j}}}(u)x_{-e_{[k]+j}+e_{[k]+\overline{i}}}(-u)\\
\end{array}
\]
\[
\begin{array}{l|l}
\SO_{2n}\times \SO_{2m+1} & \rho_{\SO}(x) \in \SO_{4nm+2n}.\; [y]=(2m+1)y,\; \overline{y}=2m+2-y \\ \hline
(x_{e_i - e_j}(t),I) & \prod_{k=1}^{2m+1} x_{e_{[i-1]+k}-e_{[j-1]+k}}(t) \\
(x_{-e_i + e_j}(t),I) & \prod_{k=1}^{2m+1} x_{-e_{[i-1]+k}+e_{[j-1]+k}}(t) \\
(x_{e_i + e_j}(t),I) &  \prod_{k=1}^{2m+1} x_{e_{[i-1]+k}+e_{[j-1]+\overline{k}}}(t)\\
(x_{-e_i - e_j}(t),I) &  \prod_{k=1}^{2m+1} x_{-e_{[i-1]+k}-e_{[j-1]+\overline{k}}}(t)\\ \hline
(I,x_{e_i - e_j}(u)) & \prod_{k=0}^{n-1}x_{e_{[k]+i}-e_{[k]+j}}(u)x_{e_{[k]+\overline{j}}-e_{[k]+\overline{i}}}(-u) \\
(I,x_{-e_i + e_j}(u)) &\prod_{k=0}^{n-1}x_{-e_{[k]+i}+e_{[k]+j}}(u)x_{-e_{[k]+\overline{j}}+e_{[k]+\overline{i}}}(-u)\\
(I,x_{e_i + e_j}(u)) & \prod_{k=0}^{n-1} x_{e_{[k]+i}-e_{[k]+\overline{j}}}(u)x_{e_{[k]+j}-e_{[k]+\overline{i}}}(-u) \\
(I,x_{-e_i - e_j}(u)) &\prod_{k=0}^{n-1} x_{-e_{[k]+i}+e_{[k]+\overline{j}}}(u)x_{-e_{[k]+j}+e_{[k]+\overline{i}}}(-u) \\
(I,x_{e_i}(u)) & \begin{array}{@{}r@{}l@{}}\prod_{k=0}^{n-1}&\big(x_{e_{[k]+i}-e_{[k]+(m+1)}}(\sqrt{2}u) x_{e_{[k]+(m+1)}-e_{[k]+\overline{i}}}(-\sqrt{2}u)\\ &\cdot x_{e_{[k]+i}-e_{[k]+\overline{i}}}(u^2)\big)\end{array}\\
(I,x_{-e_i}(u)) & \begin{array}{@{}r@{}l@{}}\prod_{k=0}^{n-1} & \big( x_{-e_{[k]+(m+1)}+e_{[k]+\overline{i}}}(-\sqrt{2}u) x_{-e_{[k]+i}+e_{[k]+(m+1)}}(\sqrt{2}u) \\
&\cdot x_{-e_{[k]+i}+e_{[k]+\overline{i}}}(u^2)\big) \end{array}\\
\end{array}
\]
\[
\begin{array}{l|l}
\SO_{2n+1}\times \SO_{2m+1} & \rho_{\SO}(x) \in \SO_{(2n+1)(2m+1)}.\; [y]=(2m+1)y,\; \overline{y}=2m+2-y \\ \hline
(x_{e_i - e_j}(t),I) & \prod_{k=1}^{2m+1} x_{e_{[i-1]+k}-e_{[j-1]+k}}(t) \\
(x_{-e_i + e_j}(t),I) & \prod_{k=1}^{2m+1} x_{-e_{[i-1]+k}+e_{[j-1]+k}}(t) \\
(x_{e_i + e_j}(t),I) &  \prod_{k=1}^{2m+1} x_{e_{[i-1]+k}+e_{[j-1]+\overline{k}}}(t)\\
(x_{-e_i - e_j}(t),I) &  \prod_{k=1}^{2m+1} x_{-e_{[i-1]+k}-e_{[j-1]+\overline{k}}}(t)\\
(x_{e_i}(t),I) & \begin{array}{@{}r@{}l@{}} \big(\prod_{k=1}^{m}& x_{e_{[i-1]+k}-e_{[n]+k}}(\sqrt{2}t)x_{e_{[i-1]+\overline{k}}+e_{[n]+k}}(\sqrt{2}t) \\
& \cdot x_{e_{[i-1]+k}+e_{[i-1]+\overline{k}}}(t^2)\big) \cdot x_{e_{[i-1]+(m+1)}}(t) \end{array} \\
(x_{-e_i}(t),I) & \begin{array}{@{}r@{}l@{}} \big(\prod_{k=1}^{m} & x_{-e_{[i-1]+\overline{k}}-e_{[n]+k}}(\sqrt{2}t)x_{-e_{[i-1]+k}+e_{[n]+k}}(\sqrt{2}t) \\
&\cdot x_{-e_{[i-1]+k}-e_{[i-1]+\overline{k}}}(t^2)\big)\cdot x_{-e_{[i-1]+(m+1)}}(t) \end{array}\\ \hline
(I,x_{e_i - e_j}(u)) & \prod_{k=0}^{n-1} \big(x_{e_{[k]+i}-e_{[k]+j}}(u)x_{e_{[k]+\overline{j}}-e_{[k]+\overline{i}}}(-u)\big) \cdot x_{e_{[n]+i}-e_{[n]+j}}(u) \\
(I,x_{-e_i + e_j}(u)) &\prod_{k=0}^{n-1} \big( x_{-e_{[k]+i}+e_{[k]+j}}(u)x_{-e_{[k]+\overline{j}}+e_{[k]+\overline{i}}}(-u)\big) \cdot x_{-e_{[n]+i}+e_{[n]+j}}(u)\\
(I,x_{e_i + e_j}(u)) &\prod_{k=0}^{n-1}\big( x_{e_{[k]+i}-e_{[k]+\overline{j}}}(u)x_{e_{[k]+j}-e_{[k]+\overline{i}}}(-u)\big)\cdot  x_{e_{[n]+i}+e_{[n]+j}}(u) \\
(I,x_{-e_i - e_j}(u)) &\prod_{k=0}^{n-1} \big( x_{-e_{[k]+i}+e_{[k]+\overline{j}}}(u)x_{-e_{[k]+j}+e_{[k]+\overline{i}}}(-u)\big)\cdot  x_{-e_{[n]+i}-e_{[n]+j}}(u) \\
(I,x_{e_i}(u)) &\begin{array}{@{}r@{}l@{}} \big(\prod_{k=0}^{n-1}& x_{e_{[k]+i}-e_{[k]+(m+1)}}(\sqrt{2}u) x_{e_{[k]+(m+1)}-e_{[k]+\overline{i}}}(-\sqrt{2}u) \\
&\cdot x_{e_{[k]+i}-e_{[k]+\overline{i}}}(u^2)\big) \cdot x_{e_{[n]+i}}(u) \end{array}\\
(I,x_{-e_i}(u)) & \begin{array}{@{}r@{}l@{}}\big(\prod_{k=0}^{n-1}& x_{-e_{[k]+(m+1)}+e_{[k]+\overline{i}}}(-\sqrt{2}u) x_{-e_{[k]+i}+e_{[k]+(m+1)}}(\sqrt{2}u) \\
&\cdot x_{-e_{[k]+i}+e_{[k]+\overline{i}}}(u^2)\big)\cdot x_{-e_{[n]+i}}(u) \end{array} \\
\end{array}
\]
\end{app}


\begin{thebibliography}{99}

\bibitem[Ba17]{Baek}
S.~Baek, 
\emph{Chow Groups of Products of Severi-Brauer Varieties and Invariants of Degree 3}, 
Trans. Amer. Math. Soc. 369 (2017), no. 3, 1757--1771.

\bibitem[Bou]{Bou4-6}
N.~Bourbaki, 
\emph{\'El\'ements de Math\'ematique, Groupes et Alg\`ebres de Lie, Chapitres 4 \`a 6}, 
Masson, Paris, 1981.

\bibitem[BR13]{BR}
H.~Bermudez, A.~Ruozzi,
\emph{Degree 3 Cohomological Invariants of Groups that are Neither Simply Connected nor Adjoint},
Journal of the Ramanujan Mathematical Society 29 (2013).

\bibitem[BT72]{BT}
A.~Borel, J.~Tits,
\emph{Compl\'ements \`a l'article ``Groupes r\'eductifs"},
Publications math\'ematiques de l'I.H.\'E.S., tome 41 (1972), p. 253-276.

\bibitem[Ga09]{Gar}
S.~Garibaldi, 
\emph{Orthogonal Involutions on Algebras of Degree 16 and the Killing Form of $E_8$}, 
Contemporary Mathematics 493, 2009.

\bibitem[GMS]{GMS}
S.~Garibaldi, A.~Merkurjev, J.-P. Serre, 
\emph{Cohomological Invariants in Galois Cohomology},  University Lecture Series 28, AMS, Providence, RI, 2003.

\bibitem[GQ08]{GQ-M}
S.~Garibaldi, A.~Qu\'eguiner-Mathieu, 
\emph{Restricting the Rost Invariant to the Center}, St. Petersburg Math J. 19 (2008), no. 2, 197--213.

\bibitem[Me16]{Mer}
A.~Merkurjev. \emph{Degree Three Cohomological Invariants of Semisimple Groups}. 
J. European Math. Soc. 18 (2016), no. 2, 657--680.

\bibitem[MNZ]{MNZ}
A.~Merkurjez, A.~Neshitov, K.~Zainoulline, 
\emph{Invariants of Degree 3 and Torsion in the Chow Group of a Versal Flag}, 
Compositio Mathematica 151 (2015), 1416--1432.

\bibitem[St67]{Stein}
R.~Steinberg, 
\emph{Lectures on Chevalley Groups}, 
Yale University, 1967.

\end{thebibliography}
\end{document}